\renewcommand{\ge}{\varepsilon} \newcommand{\R}{\mathbb{R}}
\newcommand{\C}{\mathbb{C}} 
\newcommand{\de}{\partial}
\newcommand{\M}{\mathcal{M}}
\newcommand{\N}{\mathbb{N}}
\DeclareMathOperator{\re}{Re}
\newtheorem{theorem}{Theorem}[section]
\newtheorem{lemma}[theorem]{Lemma}
\newtheorem{prpstn}[theorem]{prpstn}
\newtheorem{corollary}[theorem]{Corollary}
\theoremstyle{definition}
\newtheorem{rmrk}[theorem]{Remark}
\newcommand{\RN}{\mathbb{R}^N}
 \newcommand{\Z}{{ \it Z}}
 \newcommand{\pa}{\partial}
 \newcommand{\e}{\varepsilon}
 \newcommand{\p}{\varphi}
 \newcommand{\intr}{\int_{\RN}}
 \newcommand{\en}{\varepsilon_n}
\begin{document}

\title{\bf Multi-peak solutions for magnetic NLS equations without non--degeneracy conditions
\thanks{The first author is supported by MIUR, national project Variational and topological methods in the study of nonlinear
phenomena (PRIN 2005).
The third author is supported by MIUR, national project Variational methods and nonlinear differential equations.}}

\author{S. Cingolani \\
Dipartimento di Matematica, Politecnico di
  Bari \\
  via Orabona 4, I--70125 Bari, Italy \\
\texttt{s.cingolani@poliba.it}
\and L. Jeanjean \\
Equipe de Math\'{e}matiques
(UMR CNRS 6623) \\
 16 Route de Gray, F--25030 Besan\c{c}on, France \\
\texttt{louis.jeanjean@univ-fcomte.fr}
\and S. Secchi \\
Dipartimento di Matematica ed Applicazioni \\
 Universit\`a di Milano--Bicocca \\
 via Cozzi 53, I--20125 Milano, Italy \\
\texttt{Simone.Secchi@unimib.it}}
\date{\today}

\maketitle

\begin{abstract}
In this work we consider the magnetic NLS equation
\begin{equation}\label{eq:XXX}
 \left( \frac{\hbar}{i} \nabla -A(x)\right)^2 u + V(x)u - f(|u|^2)u \, = 0 \, \quad \mbox{ in } \R^N
\end{equation}
where $N \geq 3$, $A \colon \R^N \to \R^N$ is a magnetic potential,
possibly unbounded, $V \colon \R^N \to \R$ is a multi-well electric
potential, which can vanish somewhere, $f$ is a subcritical
nonlinear term. We prove the existence of a semiclassical multi-peak
solution $u\colon \R^N \to \C$ to $(\ref{eq:XXX})$, under conditions
on the nonlinearity which are nearly optimal.
\end{abstract}

\section{Introduction}

We study the existence of a standing wave solution $\psi(x,t) =
\exp(-iEt/\hbar)u(x)$, $E \in \R$,  $u \colon \R^N \to \C$  to the
time--dependent nonlinear Schr\"{o}dinger equation in the presence
of an external electromagnetic field
\begin{equation}\label{eq:1.1}
i\hbar \frac{\de \psi}{\de t} = \left( \frac{\hbar}{i} \nabla
-A(x)\right)^2 \psi +V(x)\psi - f(|\psi|^2)\psi, \quad (t,x) \in \R
\times \R^N.
\end{equation}
Here $\hbar$ is the Planck's constant, $i$ the imaginary unit, $A
\colon \R^N \to \R^N$ denotes a magnetic potential and $V\colon \R^N
\to \R$ an electric potential. This leads us to solve the complex
semilinear elliptic equation
\begin{equation}\label{eq:1.2}
 \left( \frac{\hbar}{i} \nabla -A(x)\right)^2 u + \left( V(x)-E
\right) u - f(|u|^2)u \,  = 0 \, , \quad \ x \in\R^N.
\end{equation}
In the work we are interested to seek for solutions of
$(\ref{eq:1.2})$, which exist for small value of the Planck constant
$\hbar >0$. From a mathematical point of view, the transition from
quantum to classical mechanics can be formally performed by letting
$\hbar \to 0$, and such solutions, which are usually referred to as
{\it semiclassical bound states}, have an important physical
meaning.

For simplicity and without loss of generality, we set $\hbar =
\varepsilon$ and we shift $E$ to $0$. Set  $v(x) = u(\e x)$, $A_\e
(x) = A(\e x)$ and $V_\e(x) = V(\e x),$ equation (\ref{eq:1.2}) is
equivalent to
\begin{equation}\label{eq:1.6}
\left( \frac{1}{i}\nabla -A_\e(x)\right)^2 v + V_\e(x)v - f(|v|^2)v
= 0, \quad x \in \R^N.
\end{equation}
\medskip

In recent years a considerable amount of work has been devoted to
investigating standing wave solutions of (\ref{eq:1.1}) in the case
$A = 0$. Among others we refer to
\cite{FW,O,R,Li,WZ,G,ABC,DF1,DF2,DF3,CNo,CL2,CN,AMS,BW1,BW2,BJ1,BJ2,JT1}.
On the contrary still relatively few papers deal with the case $A
\neq 0$, namely when a magnetic field is present. The first result
on magnetic NLS equations is due to Esteban and Lions.  In
\cite{EL}, they prove the existence of standing waves to
(\ref{eq:1.1}) by a constrained minimization approach, in the case
$V(x) =1$, for $\hbar
>0$ fixed and for special classes of magnetic fields. Successively
in \cite{K}, Kurata showed that equation \eqref{eq:1.6} admits,
under some assumptions linking the magnetic and electric potentials,
a least energy solution and that this solution concentrates near the
set of global minima of $V$, as $\hbar \to 0$. It is also proved
that the magnetic potential $A$ only contributes to the phase factor
of the solution of (\ref{eq:1.6}) for $\hbar >0$ sufficiently small.
A multiplicity result for solutions of (\ref{eq:1.6}) near global
minima of $V$ has been obtained in \cite{C} using topological
arguments. A solution that concentrates as $\hbar \to 0$ around an
arbitrary non-degenerate critical point of $V$ has been obtained in
\cite{CS1} but only for bounded magnetic potentials. Subsequently
this result was extended in \cite{CS2}  to cover also degenerate,
but topologically non trivial, critical points of $V$ and to handle
general unbounded magnetic potentials $A$. If $A$ and $V$ are
periodic functions, the existence of various type of solutions for
$\hbar >0$ fixed has been proved in \cite{AS} by applying minimax
arguments. We also mention the works \cite{BCS, CS} that deal with
critical nonlinearities.

Concerning multi-well electric potentials,  an existence result of
multi-peak solutions to the magnetic NLS equation (\ref{eq:1.6}) is
established  by Bartsch, Dancer and Peng in \cite{BDP}, assuming
that the function $f$ is increasing on $(0, +\infty)$ and satisfies
the Ambrosetti-Rabinowitz's superquadraticity condition. Also, in
\cite{BDP}, an isolatedness condition on the least energy level of
the limiting equation
\[
 - \Delta u +  bu - f(|u|^{2})u = 0, \quad u \in H^1(\R^N, \C)
\]
is required to hold for any $b>0$.

In the present paper we prove an existence result of multi-peak
solutions to (\ref{eq:1.6}), under conditions on $f$,  that we
believe to be nearly optimal. In particular we drop the isolatedness
condition, required in \cite{BDP} and we cover the case of
nonlinearities, which are not monotone.

Precisely, the following conditions will be retained.
\begin{description}
\item[(A1)] $A\colon \R^N\to\R^N$ is of class $C^1$. 
\item[(V1)] $V \in C (\R^N, \R)$, $0 \le V_0 = \inf_{x \in \R^N} V(x)$ and $
\liminf_{|x| \to \infty}V(x)
> 0$.
\item[(V2)] There are
bounded disjoint open sets $O^1,\ldots,O^k$ such that
\[ 0 < m_i = \inf_{x \in O^i} V(x) <    \min_{x \in \pa O^i}
V(x)
\]
for $i =1,\dots ,k$.
\end{description}
For each $i \in \{1,\ldots,k\},$ we define
\[
\M^i = \{x \in O^i \mid V(x) = m_i\}
\]
and we set $\Z = \{x \in \RN \mid V(x) = 0\}$ and $ \displaystyle m
= \min_{i \in \{1, \ldots, k\}}m_i$.

\medskip

On the nonlinearity $f$, we require that
 \begin{description}
 \item[(f0)] $f \colon (0, + \infty) \to \R$ is continuous;
\item[(f1)] $\displaystyle\lim_{t \to 0^+}f(t) = 0$ if $\Z = \emptyset$, and
$\displaystyle \limsup_{t \to 0^+} f(t^2)/t^{\mu} < + \infty$ for
some $\mu >0$ if $\Z \neq \emptyset$;
\item[(f2)]
there  exists some $0 < p < \frac{4}{N-2}, N \geq 3$
                 such that  $\limsup_{t \to + \infty}$ $f(t^2)/t^p$ $<$ $+ \infty$;
\item[(f3)]  there exists $T > 0$ such that
$\frac{1}{2} \hat{m} T^2  < F(T^2),$ where
\[
F(t) = \int_0^t f(s) ds, \quad \hat{m} = \max_{i \in \{1,\ldots,
k\}} m_i.
\]
\end{description}

\medskip
Now by assumption \textbf{(V1)}, we can fix $\widetilde m>0$  such
that
\begin{equation} \label{eq:2.1}
    \widetilde m <\min \left\{ m, \ \liminf_{|x|\to\infty} V(x) \right\}
\end{equation}
and define ${\tilde V}_\e(x) = \max\{ \widetilde m,V_\e(x)\}.$ Let
$H_\e$ be the Hilbert space defined by the completion of
$C_0^\infty(\RN, \C)$ under the scalar product
\begin{equation} \label{eq:2.2}
\langle u,v \rangle_{\e} = \re \int_{\R^N} \left(\frac{1}{i}\nabla u
-
  A_\e (x)u \right) \left(\overline{\frac{1}{i}\nabla v-
  A_\e (x)v}\right)  + {\tilde V}_\e(x) u \overline{v} \ dx
\end{equation}
and $\| \cdot  \|_\e  $  the associated norm.


In the present work, we shall prove the following main theorem.
\begin{theorem}\label{main}
Let $N \geq 3$. Suppose that \textbf{(A)}, \textbf{(V1-2)} and
\textbf{(f0-3)} hold. Then for any $\e>0$ sufficiently small, there
exists a solution $u_\e \in H_\e $ of $(\ref{eq:1.6})$ such that
$|u_\e|$ has $k$ local maximum points $x_\e^i \in O^i$ satisfying
\[
\lim_{\e \to 0}\max_{i=1,\dots,k}\operatorname{dist}(\e x^i_\e,\M^i)
= 0,
\]
 and for
which
\[
|u_{\e}(x)| \leq C_1 \exp \left(- C_2 \, \min_{i=1,...k}|x-
  x_\ge^i| \right)
\]
for some  positive constants $C_1$, $C_2$. Moreover for any sequence
$(\en) \subset (0, \e]$ with $\en \to 0$ there exists a subsequence,
still denoted $(\en)$, such that for each $i \in \{1,\ldots,k\}$
there exist $x^i \in \M^i$ with $\varepsilon_n x_{\e_n}^ i \to x^i,$
a constant $w_i \in \R$ and $U_i \in H^1(\R^N, \R)$ a positive least
energy solution of
\begin{equation}\label{eq:1.4}
 - \Delta U_i +  m_iU_i - f(|U_i|^{2})U_i = 0, \quad U_i \in H^1(\R^N, \R);
\end{equation}
for which one has
\begin{equation}\label{eq:1.5}
 u_{\en}(x) = \sum_{i=1}^k U_i \left({x-x_{\en}^i} \right)e^{i\left(w_i +
 A(x^i)(x-x_{\en}^i) \right)}+ K_n(x)
\end{equation}
where $K_n \in H_{\en}$ satisfies $\|K_n\|_{H_{\en}} = o(1)$ as
$\e_n \to 0$ .
\end{theorem}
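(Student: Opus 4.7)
\medskip
\noindent\textbf{Proof proposal.}
The strategy I would adopt is a penalization/min-max approach in the spirit of Byeon--Jeanjean (as in \cite{BJ1,BJ2,JT1}), adapted to the magnetic setting. The basic idea is that, because $f$ is only required to satisfy the nearly optimal conditions \textbf{(f0--3)} (no Ambrosetti--Rabinowitz, no monotonicity, no isolatedness of the least energy level), a global Palais--Smale analysis on the natural functional cannot work; instead one truncates the nonlinearity outside the wells $O^{i}$ so that the modified functional admits a well-behaved min-max scheme, and one recovers the solution of the original problem by showing that the truncation is inactive on critical points.

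\medskip
\noindent\emph{Step 1: Penalization.} Fix small $\nu>0$ and, following Byeon--Jeanjean, replace $f$ outside $\bigcup_{i}O^{i}$ by a function $g$ satisfying $g(|u|^{2})|u|\le \nu \widetilde m\,|u|$, while keeping $g(s)=f(s)$ inside the wells. This gives a modified problem
\[
\Bigl(\tfrac{1}{i}\nabla-A_{\e}\Bigr)^{2}v+V_{\e}(x)v=\chi_{\Omega_{\e}}f(|v|^{2})v+(1-\chi_{\Omega_{\e}})g(|v|^{2})v,
\]
where $\Omega_{\e}=\e^{-1}\bigcup_{i}O^{i}$, whose energy functional $\Gamma_{\e}$ on $H_{\e}$ has a nice geometry: the subquadratic tail forces a ``barrier'' which, combined with the Pohozaev-type compactness given by \textbf{(V1)}, yields Palais--Smale compactness at the relevant level.

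\medskip
\noindent\emph{Step 2: Min-max level via approximate solutions.}
For each $i$, condition \textbf{(f3)} plus the results of Berestycki--Lions guarantee the existence of a positive least energy solution $U_{i}\in H^{1}(\R^{N},\R)$ of \eqref{eq:1.4} with energy $E_{m_{i}}$. For $y=(y_{1},\dots,y_{k})\in \M^{1}\times\cdots\times \M^{k}$ I would build the magnetic trial configuration
\[
W_{\e,y}(x)=\sum_{i=1}^{k}\eta\bigl(\e x-y_{i}\bigr)\,U_{i}\!\left(x-\tfrac{y_{i}}{\e}\right)\exp\!\left(i\,A(y_{i})\bigl(x-\tfrac{y_{i}}{\e}\bigr)\right),
\]
with $\eta$ a smooth cutoff supported in a small ball. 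The phase $A(y_{i})$ is chosen precisely so that, after the change of gauge $e^{-iA(y_{i})\cdot x}$, the magnetic operator looks to leading order like the Laplacian as $\e\to 0$. A direct expansion, using that $A\in C^{1}$ and $|x-y_{i}/\e|$ is controlled on the support, gives $\Gamma_{\e}(W_{\e,y})=\sum_{i}E_{m_{i}}+o(1)$. The min-max class consists of deformations of $\{W_{\e,y}\}_{y}$ in a small $H_{\e}$-neighborhood, and the min-max value $c_{\e}$ satisfies $c_{\e}\to\sum_{i}E_{m_{i}}$.

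\medskip
\noindent\emph{Step 3: Concentration of Palais--Smale sequences.}
This is the main obstacle. Given a $(PS)_{c_{\e}}$ sequence $v_{n}$ of $\Gamma_{\e}$, I would apply a concentration-compactness argument in the magnetic framework, crucially relying on the diamagnetic inequality $|(\tfrac{1}{i}\nabla-A_{\e})v|\ge |\nabla|v||$ to reduce the analysis to the scalar problem. Translating concentration points and passing to limits, each ``bubble'' splits (after multiplication by a phase $e^{-iA(\bar y)\cdot x}$) into a non-trivial solution of the limit equation $-\Delta U+bU=f(|U|^{2})U$ with $b=\lim V(\e x_{n})$. Condition \textbf{(V2)}, together with an energy inequality using the strict inequality $m_{i}<\min_{\partial O^{i}}V$, forces the $k$ bubbles to sit, one in each well $O^{i}$, at points converging to $\M^{i}$, and to be least energy solutions of \eqref{eq:1.4}. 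Here the fact that we work with a neighborhood-type min-max rather than a mountain pass, together with a quantitative lower bound on $c_{\e}$, replaces the usual isolatedness assumption on $E_{b}$ in $b$.

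\medskip
\noindent\emph{Step 4: Back to the original problem and fine properties.}
Once a critical point $v_{\e}$ of $\Gamma_{\e}$ is obtained, elliptic regularity plus a Kato-type inequality and a Moser iteration (magnetic version) yields a uniform $L^{\infty}$ bound and exponential decay of $|v_{\e}|$ away from the concentration points $y_{i}^{\e}$. This decay shows $|v_{\e}(x)|$ is small outside $\Omega_{\e}$, so that the penalization is inactive and $v_{\e}$ actually solves the original modified equation with $g$ replaced by $f$, hence \eqref{eq:1.6}. Rescaling back via $u_{\e}(x)=v_{\e}(x/\e)$ and locating maxima gives $x_{\e}^{i}\in O^{i}$ with $\e x_{\e}^{i}\to \M^{i}$, and the decomposition \eqref{eq:1.5} with phases $w_{i}+A(x^{i})(x-x_{\e}^{i})$ follows from the concentration analysis of Step 3, with the remainder $K_{n}\to 0$ in $H_{\en}$ by construction. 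The exponential bound on $|u_{\e}|$ then follows from comparison with a fixed exponential barrier, valid for $\e$ small by \eqref{eq:2.1} and the diamagnetic inequality.
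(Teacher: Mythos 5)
Your high-level strategy is aligned with the paper's (penalization to localize the problem near the wells, a min--max scheme built from magnetic trial functions with phase $e^{iA(y_i)(x - y_i/\e)}$, a concentration argument splitting into $k$ bubbles, and Kato/Moser/comparison estimates to remove the penalization and obtain the exponential decay). However, two of your steps contain genuine inaccuracies that would have to be repaired.

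First, the penalization you describe in Step~1 (replace $f$ by a $g$ with small linear growth $g(|u|^2)|u|\le\nu\widetilde m|u|$ outside the wells) is the Del Pino--Felmer truncation, not the one used in Byeon--Jeanjean or in this paper. The paper adds an extra \emph{penalty functional}
$Q_\e(u)=\bigl(\int_{\R^N}\chi_\e|u|^2\,dy-1\bigr)_+^{(p+2)/2}$
to the energy (in the style of Byeon--Wang \cite{BW2}), keeping $f$ unchanged. This distinction matters precisely because of the weak hypotheses \textbf{(f0--3)}: without monotonicity of $f$ or an Ambrosetti--Rabinowitz condition, the truncated functional you propose does not have the structure (such as a Nehari manifold or a uniform boundedness of Palais--Smale sequences from the (AR) inequality) that is normally used to make the Del Pino--Felmer scheme run.

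Second, and more seriously, the assertion in Step~1 that the modified functional enjoys ``Palais--Smale compactness at the relevant level'' is a gap. Under \textbf{(f0--3)} the penalized functional $\Gamma_\e$ does \emph{not} satisfy the Palais--Smale condition; overcoming this failure is the central technical contribution of the Byeon--Jeanjean framework. What the paper proves instead is a local compactness statement (Proposition~3.4): any bounded Palais--Smale sequence which in addition stays in $X_\e^d$ (a tubular neighborhood of the approximate solutions) and has energy asymptotically $\le E=\sum_i E_{m_i}$ decomposes into $k$ magnetic bubbles $e^{iA(x^i)(\cdot-y_\e^i)}U_i(\cdot-y_\e^i)$. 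To actually produce such a sequence one cannot invoke a mountain-pass or linking theorem with a global (PS) condition; the paper first restricts to $H^1_0(B(0,R/\e))$, uses a deformation argument on the $k$-box of paths $\gamma_\e(s)=\sum_i W^i_{\e,s_i}$, $s\in[0,T_1]\times\dots\times[0,T_k]$, controlling the values on $\partial T$ and invoking Coti~Zelati--Rabinowitz to compare $\max_{s\in T}\Gamma_\e(\gamma(s))$ with $\sum_i C^i_\e$ (Proposition~3.5 and the estimate $\liminf_{\e\to0}C^i_\e\ge E_{m_i}$), and only then lets $R\to\infty$. Your ``small $H_\e$-neighborhood of $\{W_{\e,y}\}_y$'' as the min--max class is too vague to capture this path structure, which is what pins the min--max value at $\sum_i E_{m_i}$ and feeds the localization estimate. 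The remaining steps (magnetic phase choice, diamagnetic inequality, strict inequality $m_i<\min_{\partial O^i}V$, decay, removal of the penalization) are in accord with the paper.
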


\begin{rmrk}\label{regularity}
Arguing as in \cite{clappiturriagaszulkin}, we can develop a
bootstrap argument, and prove that the solution $u_\e \in H_\e$,
found in Theorem \ref{main}, belongs to $C^1(\R^N, \C)$. Indeed, set
$u_\e = v + i w$, with $v, w$ real valued, we have
\[
- \Delta v + V_\e v = G:= f(|u_\e|^2) v - 2 A_\e \cdot \nabla w -
|A_\e|^2 v + (div A_\e) w
\]
and
\[
- \Delta w + V_\e w = H:= f(|u_\e|^2) w + 2 A_\e \cdot \nabla v -
|A_\e|^2 w + (div A_\e) v.
\]
Since $u_\e \in H_\e$, it follows that for each $K$ bounded set in
$\R^N$, $u_\e \in H^1(K, \C)$. Therefore
 $v, w \in H^1(K, \R) \subset
L^{2^*}(K, \R)$ and by $\textbf{(f2)}$, $G, H \in L^{s}(K, \R)$,
where $s = \min\{2^*/(p-1), 2\}$. Standard regularity theory implies
that $v, w \in W^{2,s}(K)$. If $2 s <N $ we can argue as before and
derive  that $v, w \in L^{Ns/(N-2s)} (K, \R)$ and $\nabla v, \nabla
w \in L^{Ns/(N-s)} (K, \R)$. After a finite number of steps, we have
 that $v, w \in W^{2,q}(K)$ for any $q \in [1, + \infty[$ and  by the
Sobolev embedding theorems,  $v,w \in C^{1, \alpha} (K, \R)$, with
$0 < \alpha < 1$.
\end{rmrk}

\begin{rmrk}
If we assume the uniqueness of the positive least energy solutions
of (\ref{eq:1.4}) it is not necessary to pass to subsequences to get
the decomposition (\ref{eq:1.5}) in Theorem \ref{main}.
\end{rmrk}

The proof of Theorem \ref{main} follows the approach which is
developed in \cite{BJ2} to obtain multi-peak solutions when $A = 0.$
Roughly speaking we search directly for a solution of (\ref{eq:1.6})
which consists essentially of $k$ disjoints parts, each part being
close to a least energy solution of (\ref{eq:1.4}) associated to the
corresponding $\M^i$. Namely in our approach we take into account
the shape and location of the solutions we expect to find. Thus on
one hand we benefit from the advantage of the Lyapunov-Schmidt
reduction type approach, which is to discover the solution around a
small neighborhood of a well chosen first approximation. On the
other hand our approach, which is purely variational, does not
require any uniqueness nor non-degeneracy conditions.
\medskip

We remark that differently from \cite{BJ2}, we need to overcome many
additional difficulties which arise for the presence of the magnetic
potential. Indeed it is well known that, in general, there is no
relationship between the spaces $H_\e$ and $H^1(\R^N, \C)$, namely
$H_\e \not \subset H^1(\R^N, \C)$ nor $H^1(\R^N, \C)\not \subset
H_\e$ (see \cite{EL}). This fact explains, for example, the need to
restrict to bounded magnetic potentials $A$ when one uses a
perturbative approach  (see \cite{CS1}). Our Lemma \ref{equi-norm}
and Corollary \ref{equiv-norm-consequence} give some insights of the
relationship between $H_\e$ and $H^1(\R^N, \C)$ which proves useful
in the proof of Theorem \ref{main}. We also answer positively a
question raised by Kurata \cite{K}, regarding the equality between
the least energy levels for the solutions of
\[
- \Delta U + bU = f(|U|^2)U
\]
when $U$ are sought in $H^1(\R^N, \C)$ and $H^1(\R^N, \R)$
respectively. See Lemma \ref{levels} for the precise statement.

\medskip
In contrast to \cite{BDP} we do not treat here the cases $N=1$ and
$N=2$. For such dimensions applying the approach of \cite{BJ2} is
more complex. It can be done when $A = 0$ and for the case of a
single peak (see \cite{BJT}) but it is an open question  if Theorem
\ref{main} still holds when $N=1,2$.
\medskip

The work is organized as follows. In Section 2 we indicate the
variational setting and proves some preliminary results. The proof
of Theorem \ref{main} is derived in Section 3.


\section{Variational setting and preliminary results}
 For any set $B
\subset \R^N$ and $\ge
>0$, let $B_\ge = \{  x \in \R^N \mid \ge x \in B \}$.
\begin{lemma} \label{equi-norm}
Let $K \subset \R^N$ be an arbitrary fixed bounded domain. Assume
that $A$ is bounded on $K$ and $0 < \alpha \leq V \leq \beta$ on $K$
for some $\alpha, \beta >0$. Then, for any fixed $\e \in [0,1],$ the
norm
\[
\|u\|_{K_\e}^2 = \int_{K_\e}\bigg|\left(\frac{1}{i}\nabla -
A_\e(y)\right) u \bigg|^2 + {V}_\e (y) |u|^2 dy
\]
is equivalent to the usual norm on $H^1(K_\e, \C)$. Moreover these
equivalences are uniform, i.e. there exist $c_1, c_2 >0$ independent
of $\e \in [0,1]$ such that
\[
c_1 \|u\|_{K_\e} \leq \|u\|_{H^1(K_\e, \C)} \leq c_2 \|u\|_{K_\e}.
\]
\end{lemma}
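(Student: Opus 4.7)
The plan is to establish both inequalities in parallel through a pointwise algebraic expansion of the magnetic kinetic density, combined with Cauchy--Schwarz; uniformity in $\e$ will come directly from the scaling $A_\e(y) = A(\e y)$, $V_\e(y) = V(\e y)$. The key preliminary observation is that whenever $y \in K_\e$ one has $\e y \in K$, so $|A_\e(y)| \leq M := \sup_{K} |A|$ and $\alpha \leq V_\e(y) \leq \beta$, with all three bounds independent of $\e \in [0,1]$; this is the only place where the uniformity enters.

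For the inequality $\|u\|_{K_\e} \leq c_1^{-1}\|u\|_{H^1(K_\e,\C)}$, I would start from the pointwise identity
\[
\Bigl|\Bigl(\frac{1}{i}\nabla - A_\e\Bigr)u\Bigr|^2 = |\nabla u|^2 - 2 A_\e \cdot \operatorname{Im}(\bar u\, \nabla u) + |A_\e|^2 |u|^2,
\]
control the cross term by Cauchy--Schwarz as $|2 A_\e\cdot\operatorname{Im}(\bar u\,\nabla u)| \le |\nabla u|^2 + M^2|u|^2$, and combine this with $V_\e \leq \beta$ on $K_\e$ to obtain $\|u\|_{K_\e}^2 \leq 2\int_{K_\e}|\nabla u|^2 + (2M^2 + \beta)\int_{K_\e}|u|^2$, which is clearly $\leq C(M,\beta)\|u\|_{H^1(K_\e,\C)}^2$.

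For the converse bound $\|u\|_{H^1(K_\e,\C)} \leq c_2 \|u\|_{K_\e}$, I would exploit the reverse algebraic identity $\nabla u = i\bigl(\frac{1}{i}\nabla - A_\e\bigr)u + i A_\e u$, from which
\[
|\nabla u|^2 \le 2\Bigl|\Bigl(\frac{1}{i}\nabla - A_\e\Bigr)u\Bigr|^2 + 2 M^2|u|^2,
\]
and then use $V_\e \geq \alpha$ on $K_\e$ to absorb the surviving $|u|^2$ terms via $|u|^2 \leq \alpha^{-1} V_\e|u|^2$, producing a constant $c_2$ depending only on $M$ and $\alpha$.

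I do not anticipate a genuine obstacle: the statement is essentially a uniform-in-$\e$ version of the classical equivalence between magnetic and usual Sobolev norms on a bounded domain. The one point to watch is that the constants $M, \alpha, \beta$ must be read off from the fixed set $K$ rather than from the (potentially large) scaled set $K_\e$, which is precisely what the covariance $A_\e(\cdot) = A(\e\, \cdot)$, $V_\e(\cdot) = V(\e\, \cdot)$ makes possible.
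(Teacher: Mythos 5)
Your proof is correct, and for the nontrivial inequality $\|u\|_{H^1(K_\e,\C)}\leq c_2\|u\|_{K_\e}$ it takes a genuinely different and simpler route than the paper. Both proofs handle the easy direction $\|u\|_{K_\e}\leq C\|u\|_{H^1(K_\e,\C)}$ by the same pointwise expansion and the observation that $y\in K_\e$ implies $\e y\in K$, so the bounds $|A_\e|\leq M:=\sup_K|A|$, $\alpha\leq V_\e\leq\beta$ hold on $K_\e$ uniformly in $\e$. For the converse inequality, however, the paper only extracts the weaker pointwise lower bound $|D^\e u|\geq \bigl||\nabla u|-|A_\e u|\bigr|$ (reverse triangle inequality), whose expansion has a \emph{negative} cross term; since that expression cannot be absorbed directly, the paper is forced into a compactness--contradiction argument in the spirit of Arioli--Szulkin: assume a normalized sequence $u_{\e_n}$ with $\|u_{\e_n}\|_{K_{\e_n}}\to 0$, deduce $u_{\e_n}\to 0$ in $L^2$, show the cross term vanishes, and contradict the normalization. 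You sidestep this entirely by writing $\nabla u = i\bigl(\tfrac{1}{i}\nabla - A_\e\bigr)u + iA_\e u$ and applying the (forward) triangle inequality, which gives the pointwise bound $|\nabla u|^2\leq 2|D^\e u|^2 + 2M^2|u|^2$ with an explicit, $\e$-independent constant, after which $|u|^2\leq\alpha^{-1}V_\e|u|^2$ closes the estimate. Your argument yields explicit constants $c_1,c_2$ in terms of $M,\alpha,\beta$ alone and avoids any compactness or limiting procedure; the paper's approach is softer and does not produce explicit constants, but is the one the authors inherited from the reference they followed.
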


\begin{proof}
Our proof is inspired by the one of Lemma 2.3 in \cite{AS}. We have
\[
\int_{K_\e}|A_\e(y)u|^2 dy \leq
\|A\|_{L^{\infty}(K)}\int_{K_\e}|u|^2 dy
\]
and
\[
 \int_{K_\e} {V}_\e(y)|u|^2 dy \leq
\|V \|_{L^{\infty}(K)}\int_{K_\e}|u|^2 dy .
\]
Hence
\begin{eqnarray*}
\int_{K_\e} \left|\frac{1}{i}\nabla u - A_\e(y)u \right|^2 +
{V}_\ge(y)|u|^2 dy &\leq& \int_{K_\e} 2 \big( |\nabla u|^2 +
|A_\e(y)u|^2 \big) +
{V}_\ge (y) |u|^2 dy \\
&\leq& 2 \int_{K_\e} |\nabla u|^2 dy + \big( 2 \|A\|_{L^{\infty}(K)}
+ \|{V} \|_{L^{\infty}(K)}\big) \int_{K_\e}|u|^2 dy.
\end{eqnarray*}
To prove the other inequality note that
\begin{equation*}
\int_{K_\e} \left| \frac{1}{i}\nabla u - A_\e(y)u \right|^2 +
{V}_\ge (y)|u|^2 dy \geq \int_{K_\e} \left| |\nabla u|^2 -
|A_\e(y)u| \right|^2 + {V}_\ge (y) |u|^2 dy.
\end{equation*}
We shall prove that, for some $d >0$ independent of $\e \in [0,1]$,
\begin{equation}\label{eq:2.3}
\int_{K_\e} \big| |\nabla u| - |A_\e(y)u |\big|^2 + {V}_\ge(y)|u|^2
dy \geq d \int_{K_\e}  |\nabla u|^2 + |u|^2 dy.
\end{equation}
Arguing by contradiction we assume that there exist sequences $(\en)
\subset [0,1]$ and $(u_{\en}) \subset H^1(K_{\en}, \C)$ with
$\|u_{\en}\|_{H^1(K_{\en}, \C)} =1$ such that
\begin{equation}\label{eq:2.4}
\int_{K_{\en}} \big| |\nabla u_{\en}| - |A_{\en}(y)u_{\en}| \big|^2
+ V_{\en} (y)|u_{\en}|^2 dy < \frac{1}{n}.
\end{equation}
Clearly $(u_{\en}) \subset H^1(\R^N, \C)$ and
$\|u_{\en}\|_{H^1(\R^N, \C)} =1$. Passing to a subsequence,
$u_{\en}\rightharpoonup u$ weakly in $H^1(\R^N, \C).$ Since $
V_{\en} \geq \alpha >0$ on $K_{\en}$ we see from (\ref{eq:2.4}) that
necessarily
\[
\int_{K_{\en}} |u_{\en}|^2 dy \to 0.
\]
Thus $u_{\ge_n} \to 0$ in $L^2(\R^N, \C)$ strongly and in particular
$u_{\ge_n} \rightharpoonup 0$ in $H^1(\R^N, \C)$. Now
\begin{equation*}
\int_{K_{\en}} \big| |\nabla u_{\en}| - |A_{\en}(y)u_{\en}| \big|^2
dy = \int_{K_{\en}}  |\nabla u_{\en}|^2 - 2|A_{\en}(y)u_{\en}| \,
|\nabla u_{\en}| + |A_{\en}(y)u_{\en}|^2  dy
\end{equation*}
with
\[
\int_{K_{\en}}  |A_{\en}(y)u_{\en}| \, |\nabla u_{\en}| dy \to 0.
\]
Indeed we have
\begin{eqnarray*}
\int_{K_{\en}}  |A_{\en}(y)u_{\en}| \, |\nabla u_{\en}| dy &\leq&
\left( \int_{K_{\en}} |A_{\en}(y)u_{\en}|^2 dy \right)^{\frac{1}{2}}
\bigg( \int_{K_{\en}}  |\nabla u_{\en}|^2 dy \bigg)^{\frac{1}{2}} \\
&\leq& \|A\|_{L^{\infty}(K)} \bigg( \int_{K_{\en}} |u_{\en}|^2 dy
\bigg)^{\frac{1}{2}}.
\end{eqnarray*}
Thus
\[
0 = \limsup_{n \to +\infty} \int_{K_{\en}} \big| |\nabla u_{\en}| -
|A_{\en}(y)u_{\en}| \big|^2 dy \geq \limsup_{n \to +\infty}
\int_{K_{\en}} |\nabla u_{\en}|^2 dy.
\]
But this is impossible since otherwise we would have $u_{\en} \to 0$
strongly in $H^1(\R^N, \C)$.
\end{proof}
From Lemma \ref{equi-norm} we immediately deduce the following
corollary.

\begin{corollary} \label{equiv-norm-consequence}
Retain the setting of ~Lemma \ref{equi-norm}.
\begin{enumerate}
\item[(i)] If $K$ is compact, for any $\e \in (0,1]$ the norm
\[
\|u\|_K^2 := \int_K \left| \left( \frac{1}{i}\nabla - A_\e (y)
\right )u\right|^2 + {V}_\ge (y) |u|^2 dy
\]
is uniformly equivalent to the usual norm on $H^1(K, \C)$.
\item[(ii)] For $A_0 \in \R^N$ and $b>0$ fixed, the norm
\[
\|u\|^2 := \int_{\R^N}\left| \left( \frac{1}{i}\nabla - A_0 \right)
u\right|^2 + b|u|^2 dy
\]
is equivalent to the usual norm on $H^1(\R^N, \C)$.
\item[(iii)] If $(u_{\en}) \subset H^1(\R^N, \C)$ satisfies $u_{\en} =
0$ on $\R^N \setminus K_{\en}$ for any $n \in \N$ and $u_{\en} \to
u$ in $H^1(\R^N, \C)$ then $\|u_{\en} - u\|_{{\en}} \to 0$ as $n \to
\infty.$
\end{enumerate}
\end{corollary}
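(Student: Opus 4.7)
For part (i), I would repeat the structure of Lemma \ref{equi-norm}'s proof with the integration domain $K$ now fixed in place of $K_\e$. Because $K$ is compact and $\e \in (0,1]$, the set $\{\e y : y \in K,\ \e \in (0,1]\}$ is bounded, so $A$ is uniformly bounded there and $\tilde V_\e$ is uniformly bounded above on $K$ and bounded below by $\widetilde m > 0$. The upper estimate then follows from the pointwise inequality $|(\nabla/i - A_\e)u|^2 \le 2|\nabla u|^2 + 2|A_\e|^2|u|^2$. The lower estimate is obtained by contradiction: assuming $(u_n) \subset H^1(K, \C)$ with $\|u_n\|_{H^1(K, \C)} = 1$ and $\|u_n\|_K \to 0$, one extracts a weak limit in $H^1(K, \C)$ and reaches a contradiction exactly as in Lemma \ref{equi-norm}. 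The argument is in fact simpler here because $K$ does not vary with $\e$.

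For part (ii), no compactness argument is needed since $A_0$ and $b$ are constants. The upper bound is immediate from $|(\nabla/i - A_0)u|^2 \le 2|\nabla u|^2 + 2|A_0|^2|u|^2$. For the lower bound, expand
\[
\left|\tfrac{1}{i}\nabla u - A_0 u\right|^2 = |\nabla u|^2 + |A_0|^2|u|^2 - 2 \re\!\left(\tfrac{1}{i}\nabla u \cdot \overline{A_0 u}\right),
\]
and apply Young's inequality with parameter $\eta \in (0,1)$ to the cross term to obtain
\[
\left|\tfrac{1}{i}\nabla u - A_0 u\right|^2 \ge (1-\eta)|\nabla u|^2 - \left(\tfrac{1}{\eta}-1\right)|A_0|^2|u|^2.
\]
Integrating and choosing $\eta$ close enough to $1$ that $b - (1/\eta - 1)|A_0|^2 > 0$ yields positive coefficients on both $\|\nabla u\|_{L^2}^2$ and $\|u\|_{L^2}^2$, which is the desired lower bound.

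For part (iii), exploiting $u_\en = 0$ on $\R^N \setminus K_\en$, the natural decomposition is
\begin{align*}
\|u_\en - u\|_\en^2 &= \int_{K_\en} \left(\left|\tfrac{1}{i}\nabla(u_\en - u) - A_\en(u_\en - u)\right|^2 + \tilde V_\en|u_\en - u|^2\right) dy \\
&\quad + \int_{\R^N \setminus K_\en} \left(\left|\tfrac{1}{i}\nabla u - A_\en u\right|^2 + \tilde V_\en|u|^2\right) dy.
\end{align*}
Lemma \ref{equi-norm} (with $V$ replaced by $\tilde V$, noting $\widetilde m \le \tilde V_\en$ and the uniform upper bound of $\tilde V_\en$ on $K$) bounds the first integral by a uniform constant times $\|u_\en - u\|_{H^1(K_\en, \C)}^2 \le \|u_\en - u\|_{H^1(\R^N, \C)}^2 \to 0$. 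The second integral is the genuinely delicate point: since $u_\en \to u$ strongly in $H^1$ with each $u_\en$ vanishing outside $K_\en$, the $L^2$ limit $u$ must vanish a.e.\ on the complement of $\limsup_n K_\en$; under the natural assumption that $K_\en$ exhausts $\R^N$ as $\en \to 0$ (for instance when $K$ contains a neighborhood of the origin), this tail tends to $0$ by dominated convergence combined with the local boundedness of $A$ and $V$ on $K_\en$. Handling this tail cleanly is the main obstacle, since the possible unboundedness of $A$ at infinity prevents one from bounding $|A_\en u|^2$ naively by an $L^\infty$ norm of $A$.
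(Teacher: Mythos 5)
Your overall decomposition is sound, but the route differs from the paper in an instructive way, and one of your steps is less complete than you seem to realize.

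For (ii), you give a self-contained direct proof via Young's inequality on the cross term $-2\re(\tfrac{1}{i}\nabla u\cdot\overline{A_0 u})$, which is correct and arguably cleaner. The paper instead observes that (ii) is literally the $\e=0$ case of Lemma \ref{equi-norm}: take $A\equiv A_0$, $V\equiv b$ on a bounded domain $K$ containing the origin; then $A_\e(y)=A(\e y)\big|_{\e=0}=A_0$, $V_\e(y)\big|_{\e=0}=b$, and $K_0=\{x: 0\cdot x\in K\}=\R^N$, so the lemma's uniform estimate at $\e=0$ is exactly (ii). Your argument has the merit of quantifying the constants explicitly and not relying on the compactness/contradiction step hidden inside the lemma; the paper's is shorter because the machinery is already in place.

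For (i), your plan (repeat the contradiction argument with the domain $K$ fixed, using continuity of $A$ and $V$ to get uniform bounds on $\{\e y : y\in K,\ \e\in(0,1]\}$) is the natural reading of the paper's ``(i) is trivial'' and is correct.

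For (iii), your splitting over $K_{\en}$ and its complement is the right structure, and the $K_{\en}$-part is handled correctly by invoking the uniformity in Lemma \ref{equi-norm}. However, your treatment of the tail is not quite right: you appeal to ``dominated convergence combined with the local boundedness of $A$ and $V$ on $K_\en$,'' but the integrand on $\R^N\setminus K_{\en}$ involves $A_{\en}(y)=A(\en y)$ and $\tilde V_{\en}(y)=\tilde V(\en y)$ with $\en y$ ranging over $\R^N\setminus K$, where $A$ and $V$ may be unbounded; there is no fixed dominating function, so dominated convergence does not apply as stated. The paper's one-line justification (``follows from the uniformity'') does not address this either, because the point where the corollary is actually invoked (end of Proposition \ref{prop40}) is a genuine special case: there $u=0$ and the approximating objects are all supported in a fixed blown-up bounded set, so the tail term is absent and only the uniform equivalence on $K_{\en}$ is needed. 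You correctly flagged the tail as the delicate point; just be aware that your proposed resolution of it does not work for general $u$, and that the statement is only ever used in the paper in the tail-free situation.
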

\begin{proof}
Indeed (i) is trivial, to see (ii) just put $\e = 0$ in Lemma
\ref{equi-norm}. Now (iii) follows from the uniformity of the
equivalence derived in Lemma \ref{equi-norm}.
\end{proof}

\medskip



For future reference we recall the following \textit{Diamagnetic
inequality}: for every $u \in H_\e$,
\begin{equation}\label{eq:2.6}
\left| \left(\frac{\nabla}{i} - A_\e \right) u\right| \geq \big|
\nabla |u|\big|, \quad \mbox{a.e. in $\RN$. }
\end{equation}
See \cite{EL} for a proof. As a consequence of \eqref{eq:2.6}, $|u|
\in H^1(\R^N, \R)$ for any $u \in H_{\e}.$ \medskip

Now we define
\[
\M = \bigcup_{i=1}^k\M^i, \quad O = \bigcup_{i=1}^k O^i
\]
and for any set $B \subset \RN$ and $\alpha > 0,$ $B^\delta = \{x
\in \RN \mid \operatorname{dist}(x,B) \le \delta\}$. For $u \in
H_\e,$ let
\begin{equation}
{\mathcal F}_\ge (u) = \frac{1}{2}\int_{\RN}| D^\ge u|^2 +V_\e |u|^2
dy - \int_{\RN} F(|u|^2) dy
\end{equation}
where we set $D^\ge = (\frac{\nabla}{i} - A_\e).$ Define
\begin{equation*}
\chi_\e(y) =
\begin{cases}
0 &\text{if $y \in O_\e$} \\
\e^{-6 / \mu} &\text{if $y \notin O_\e$},
\end{cases}
\quad \chi^i_\e(y) =
\begin{cases}
0 & \text{if $y \in (O^i)_\e$} \\
\e^{-6 / \mu} &\text{if $y \notin (O^i)_\e$},
\end{cases}
\end{equation*}
and
\begin{equation}
Q_\e(u) = \Big (\intr \chi_\e |u|^2 dy -  1\Big )^{\frac{p+2}{2}}_+,
\ \ Q^i_\e(u) = \Big (\intr \chi^i_\e |u|^2 dy - 1\Big
)^{\frac{p+2}{2}}_+.
\end{equation}
The functional $Q_\e$ will act as a penalization to force the
concentration phenomena to occur inside O. This type of penalization
was first introduced in \cite{BW2}. Finally we define the
functionals $\Gamma_\e,\Gamma_\e^1,\ldots,\Gamma_\e^k : H_\e \to \R$
by
\begin{equation}
\Gamma_\e(u) = {\mathcal F}_\ge(u) + Q_\e(u), \ \ \Gamma_\e^i(u) =
{\mathcal F}_\ge(u) + Q^i_\e(u), \, i = 1,\ldots ,k.
\end{equation}
It is easy to check, under our assumptions, and using the
Diamagnetic inequality (\ref{eq:2.6}), that the functionals
$\Gamma_\e,\Gamma_\e^i \in C^1(H_\e).$ So a critical point of
${\mathcal F}_\ge $ corresponds to a solution of (\ref{eq:1.6}). To
find solutions of (\ref{eq:1.6}) which {\it concentrate} in $O$ as
$\e \to 0,$ we shall look for a critical point of $\Gamma_\e$ for
which $Q_\e$ is zero. \medskip

Let us consider for $a>0$ the scalar limiting equation of
(\ref{eq:1.6})
\begin{equation}\label{eq:2.7}
-\varDelta u + a  u = f(|u|^2) u, \quad u \in H^1(\R^N, \R).
\end{equation}
Solutions of $(\ref{eq:2.7})$ correspond to critical points of the
limiting functional $L_a \colon H^1(\R^N, \R) \to \R$ defined by
\begin{equation}\label{eq:limit}
L_a (u)= \frac12 \int_{\R^N} \big( |\nabla u|^2 + a|u|^2 \big) dy -
\int_{\R^N} F(|u|^2) dy.
\end{equation}
In \cite{BL}, Berestycki and Lions proved that, for any $a>0$, under
the assumptions \textbf{(f0--2)} and \textbf{(f3)} with $\hat{m}=a$,
there exists a least energy solution and that each solution $U$ of
(\ref{eq:2.7}) satisfies the Pohozaev's identity
\begin{equation} \label{eq:2.8}
\frac{N-2}{2}\int_{\RN}|\nabla U|^2 dy + N\int_{\RN}a\frac{|U|^2}{2}
- F(|U|^2) dy = 0.
\end{equation}
From this we immediately deduce that, for any solution~$U$ of
\eqref{eq:2.7},
 \begin{equation} \label{eq:2.9}
\frac{1}{N}\int_{\RN}|\nabla U|^2 dy  = L_a(U).
\end{equation}
We also consider the complex valued equation, for $a>0$,
\begin{equation}\label{eq:2.10}
-\varDelta u + a  u = f(|u|^2) u, \quad u \in H^1(\R^N, \C).
\end{equation}
In turn solutions of $(\ref{eq:2.10})$ correspond to critical points
of the functional $L^c_a :  H^1(\R^N, \C) \to \R$, defined by
\begin{equation}\label{eq:2.11}
L^c_a (v)= \frac12 \int_{\R^N} \left( |\nabla v|^2 + a|v|^2
\right)dy - \int_{\R^N} F(|v|^2)dy.
\end{equation}
In \cite{S} the Pohozaev's identity (\ref{eq:2.8}) and thus
(\ref{eq:2.9}) is given for complex--valued solutions of
(\ref{eq:2.10}). The following result relates the least energy
levels of (\ref{eq:2.7}) and (\ref{eq:2.10}) and positively answers
to a question of  Kurata \cite{K} (see also \cite{SS} for some
elements of proof in that direction). When $N=2$ we say that
\textbf{(f2)} holds if
\medskip
\begin{center}
for all $\alpha>0$ there exists $C_\alpha >0$ such that
$|f(t^2)|\leq C_{\alpha} e^{\alpha t^2}$, for all $t \geq 0$.
\end{center}
\medskip
\begin{lemma}\label{levels}
Suppose that \textbf{(f0--2)} and \textbf{(f3)} with $\hat{m} = a$
hold and that $N\geq 2$. Let $E_a$ and $E_a^c$ denote the least
energy levels corresponding to equations (\ref{eq:2.7}) and
(\ref{eq:2.10}). Then
\begin{equation} \label{eq:uguali}
 E_a= E_a^c.
 \end{equation}
Moreover any least energy solution of (\ref{eq:2.10}) has the form
$e^{i \tau} U$ where $U$ is a positive least energy solution of
(\ref{eq:2.7}) and $\tau \in \R$.
\end{lemma}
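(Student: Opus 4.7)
The inequality $E_a^c \le E_a$ is immediate: any real-valued solution of \eqref{eq:2.7} is a complex-valued solution of \eqref{eq:2.10} with identical energy. The substance of the lemma is the reverse inequality together with the structural description of complex least energy solutions, and my plan is to extract both from a single rescaling argument applied to $|v|$, where $v$ is an arbitrary least energy complex solution.

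Let $v$ achieve $L_a^c(v) = E_a^c$ and set $I := \int_{\R^N}|\nabla v|^2\,dy$. The Pohozaev identity for complex solutions from \cite{S}, together with \eqref{eq:2.9}, yields $L_a^c(v) = I/N$ and $\int_{\R^N}(F(|v|^2) - (a/2)|v|^2)\,dy = \frac{N-2}{2N}\,I$. Applying the diamagnetic inequality \eqref{eq:2.6} with the magnetic potential set to zero, $|v| \in H^1(\R^N, \R)$ and $|\nabla|v||(y) \le |\nabla v|(y)$ a.e.; set $J := \int_{\R^N}|\nabla|v||^2\,dy \le I$. Consider the real rescaling $g(t) := L_a(|v|(\cdot/t))$ for $t>0$. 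Using the standard scaling identities for $u(\cdot/t)$ and the two displayed equalities above, one computes
\begin{equation*}
g(t) = \frac{t^{N-2}}{2}\,J - \frac{(N-2)\,t^N}{2N}\,I,
\end{equation*}
whose unique positive maximum is at $t_* = (J/I)^{1/2}$, with value $g(t_*) = J^{N/2}/(N\,I^{(N-2)/2})$. The variational characterization of Berestycki-Lions \cite{BL} (refined under our weak hypotheses in \cite{JT1}) provides $E_a \le \max_{t>0} L_a(w(\cdot/t))$ for every admissible real $w$. Applied with $w = |v|$, this yields
\begin{equation*}
E_a \le g(t_*) = \frac{1}{N}\,\frac{J^{N/2}}{I^{(N-2)/2}} \le \frac{I}{N} = L_a^c(v) = E_a^c,
\end{equation*}
the second inequality being $J \le I$. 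Combined with the trivial direction, $E_a = E_a^c$.

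For the structural statement, equality throughout the chain forces $J = I$, hence (by the pointwise bound) $|\nabla|v||(y) = |\nabla v|(y)$ a.e. Writing $v = |v|\,e^{i\theta}$ on $\{v \ne 0\}$ and using the elementary identity $|\nabla v|^2 = |\nabla|v||^2 + |v|^2|\nabla\theta|^2$, we deduce $|v|^2|\nabla\theta|^2 = 0$ a.e. From $J = I$ we also get $t_* = 1$, so $L_a(|v|) = g(1) = g(t_*) = E_a$ while $g'(1) = 0$ is precisely the real Pohozaev identity for $|v|$; thus $|v|$ minimizes $L_a$ on the real Pohozaev manifold, and the standard Lagrange multiplier / Pohozaev analysis forces $|v|$ to be a critical point of $L_a$, hence a real-valued least energy solution of \eqref{eq:2.7}. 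The strong maximum principle then gives $|v| > 0$ on $\R^N$, so $\nabla\theta = 0$ a.e., meaning $\theta$ is a constant $\tau$, and $v = e^{i\tau}\,U$ with $U := |v|$ a positive least energy solution.

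The main technical obstacle is securing the characterization $E_a \le \max_{t>0} L_a(w(\cdot/t))$ and the companion fact that minimizers of $L_a$ on the Pohozaev manifold are genuine critical points, under the nearly optimal hypotheses \textbf{(f0-3)}; both points rest on the machinery of \cite{BL} and \cite{JT1}, and one must check that \textbf{(f1)}-\textbf{(f2)} suffice to control the rescaled paths. Once these are in place, the argument is driven purely by the diamagnetic inequality, the complex Pohozaev identity, and the elementary analysis of the one-variable function $g$.
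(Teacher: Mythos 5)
Your argument is correct for $N\geq 3$ and is essentially the paper's own proof in slightly different clothing: the paper locates $\theta\in(0,1]$ with $P(|U_\theta|)=0$ and applies the Pohozaev-minimization characterization $E_a=\inf_{P(u)=0}L_a(u)$ from \cite{JT2} (the relevant reference is \cite{JT2}, not \cite{JT1}), which is exactly what your choice $t_*=(J/I)^{1/2}$ amounts to since $t\,g'(t)=P\bigl(|v|(\cdot/t)\bigr)$; and where you unwind the phase $v=|v|e^{i\theta}$ by hand, the paper cites Theorem 4.1 of \cite{HS}, which performs the same computation.

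There is, however, a small gap you should close: for $N=2$ the function $g(t)=J/2$ is constant, the ``unique positive maximum'' at $t_*=(J/I)^{1/2}$ does not exist, and $t\mapsto|v|(\cdot/t)$ is not a mountain-pass path because the Pohozaev functional $P$ is scale-invariant in dimension two. Since the lemma is stated for $N\geq 2$, you need the separate (and simpler) observation the paper makes in this case: $P(|v|)$ depends only on $|v|$, so $P(|v|)=P(v)=0$ automatically, and then $L_a(|v|)=\tfrac12 J\leq\tfrac12 I=L_a^c(v)$ follows without any rescaling, after which the equality and phase analysis proceed as you wrote.
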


\begin{proof}
The inequality $E_a^c \leq E_a$ is obvious and thus to establish
that $E_a^c = E_a$ we just need to prove that $E_a \leq E_a^c$.

We know from \cite{S} that each solution of (\ref{eq:2.10})
satisfies the Pohozaev's identity $P(u)=0$ where $P\colon H^1(\R^N,
\C) \to \R$ is defined by
\[
P(u) = \frac{N-2}{2}\int_{\R^N}|\nabla u|^2 dy +
N\int_{\RN}a\frac{|u|^2}{2} - F(|u|^2) dy.
\]
By Lemma 3.1 of \cite{JT2} we have that
\begin{equation}\label{minimum}
\inf_{\substack{u \in H^1(\R^N,\R) \\ P(u)=0}} L_a(u) = E_a.
\end{equation}
Also it is well known (see for example \cite{HS}) that for any $u
\in H^1(\R, \C)$ one has
\begin{equation}\label{minimum1}
\int_{\R^N}\big|\nabla |u| \big|^2 dy \leq \int_{\R^N}|\nabla u|^2
dy.
\end{equation}
Now let $U$ be a solution of (\ref{eq:2.10}). If $N=2$ we see from
the definition of $P$ that $P(|U|) =0$ and from (\ref{minimum1})
that $L_a(|U|) \leq L_a^c(U)$. Thus  $E_a \leq E_a^c$ follows from
(\ref{minimum}). In addition, if $U$ is a least energy solution of
(\ref{eq:2.10}), necessarily
\begin{equation}\label{ajout}
\int_{\R^N}\big|\nabla |U| \big|^2 dy = \int_{\R^N}|\nabla U|^2 dy
\end{equation}
and $|U|$ is a least energy solution of (\ref{eq:2.7}). If $N \geq
3$ we see from (\ref{minimum1}) that either
\begin{enumerate}
\item[i)] $P(|U|)=0$ and $L_a(|U|) = L_a^c(U).$
\item[ii)] $P(|U|) <0$ and there exists $\theta \in ]0,1[$ such that, for
$U_{\theta}(\cdot) = U(\cdot/\theta)$ we have $P(|U_{\theta}|) = 0$.
Then, since $P(|U_{\theta}|) = 0$, it follows that
\[
L_a(|U_{\theta}|) = \frac{1}{N}\int_{\R^N}\big|\nabla |U_{\theta}|
\big|^2 dy = \frac{\theta^{N-2}}{N} \int_{\R^N} \big|\nabla
|U|\big|^2 dy
\]
and thus
\[
L_a(|U_{\theta}|) < \frac{1}{N}\int_{\R^N}\big|\nabla |U|\big|^2 dy
\leq \frac{1}{N}\int_{\R^N}|\nabla U|^2 dy= L_a^c(U).
\]
\end{enumerate}
In both cases we deduce from  (\ref{minimum}) that $E_a \leq E_a^c$.
In addition if $U$ is a least energy solution of (\ref{eq:2.10})
then (\ref{ajout}) holds and in particular $|U|$ is a least energy
solution of (\ref{eq:2.7}).

\noindent Now, for any $N \geq 2$, let $U$ be a least energy
solution of (\ref{eq:2.10}). Since $|U|$ is a solution of
(\ref{eq:2.7}) we get by elliptic regularity theory and the maximum
principle  that $|U| \in C^1(\R^N, \R)$ and $|U| >0$. At this point,
using (\ref{ajout}), the rest of the proof of the lemma is exactly
the same as the proof of Theorem 4.1 in \cite{HS}.
\end{proof}

\begin{rmrk}
When $N=1$ conditions which assure that (\ref{eq:2.7}) has, up to
translation, a unique positive solution are given in \cite{BL} (see
also \cite{JT3} for alternative conditions). Now following the proof
of Theorem 8.1.6 in \cite{Ca} we deduce that any solution of
(\ref{eq:2.10}) is of the form $e^{i \theta}\rho$ where $\theta \in
\R$ and $\rho >0$ is a solution of (\ref{eq:2.7}). Thus, under the
assumptions of \cite{BL,JT3}, the result of Lemma \ref{levels} also
holds when $N=1$ and the positive least energy solution is unique.
\end{rmrk}

Now let  $S_a$ be the set of least energy solutions $U$ of
(\ref{eq:2.10}) satisfying $$|U(0)| = \max\limits_{y \in \RN}
|U(y)|.$$ By standard regularity any solution of (\ref{eq:2.10}) is
at least $C^1$. Since $f$ is not assumed to be locally H\"{o}lder
continuous we do not know, in contrast to \cite{BDP}, if any least
energy solution is radially symmetric. However the following
compactness result can still be proved.

\begin{prpstn}\label{Prop1}
For each $a > 0$ and $N \geq 3,$ $S_a$ is compact in $H^{1}(\RN,
\C).$ Moreover, there exist $C$, $c > 0$, independent of $U \in
S_a$, such that
\[|U(y)| \le C\exp(-c |y|).\]
\end{prpstn}

\begin{proof}
In \cite{BJ1}, the same results are proved when $S_a$ is restricted
to real solutions. Since, by Lemma \ref{levels}, any least energy
solution of (\ref{eq:2.10}) is of the form $e^{i \tau}\tilde{U}$
with $\tilde{U}$ a least energy solution of (\ref{eq:2.7}) it proves
the lemma.
\end{proof}


\section{Proof of Theorem \ref{main} }

Let
\[
\delta = \frac{1}{10} \min \left\{\operatorname{dist}(\M,\RN
\setminus O), \min_{ i\ne j} \operatorname{dist}(O_i,O_j),
\operatorname{dist}(O,\Z) \right\}.
\]
We fix a $\beta \in (0,\delta)$ and a cutoff $\p \in
C_0^\infty(\RN)$ such that $0 \le \p \le 1,$ $\p(y) = 1$ for $|y|
\le \beta$ and $\p(y) = 0$ for $|y| \geq 2\beta$. Also, setting
$\p_\e(y) = \p(\e y)$  for each $x_i \in (\M^i)^\beta$ and $U_i \in
S_{m_i},$ we define
\[
U_\e^{x_1,\dots,x_k}(y) = \sum_{i=1}^k e^{iA(x_i)(y-
\frac{x_i}{\e})} \p_\e \left(y-\frac{x_i}{\e} \right) U_i
\left(y-\frac{x_i}{\e} \right).
\]
We will find a solution, for sufficiently small $\e
> 0,$  near the set
\[ X_\e = \{U_\e^{x_1\dots,x_k}(y)  \mid \text{$x_i \in (\M^i)^\beta$ and $U_i \in S_{m_i}$ for each $i = 1,\dots,k$}\}.
\]
For each $i \in \{1,\dots,k\}$ we fix an arbitrary $x_i \in \M^i$
and an arbitrary $U_i \in S_{m_i}$ and we define
\[
W^i_\e(y) = e^{iA(x_i)(y -\frac{x_i}{\e})}\p_\e
\left(y-\frac{x_i}{\e}\right) U_i \left(y-\frac{x_i}{\e} \right).
\]
Setting
\[
W^i_{\e,t}(y) = e^{iA(x_i)(y -\frac{x_i}{\e})} \p_\e
\left(y-\frac{x_i}{\e} \right) U_i \left(\frac{y}{t}-\frac{x_i}{\e
t} \right),
\]
 we see that $\lim_{t \to 0}\Vert W^i_{\e,t}
\Vert_\e = 0$ (recall that $N \geq 3$) and that
$\Gamma_\e(W^i_{\e,t}) = {\mathcal F}_\ge(W^i_{\e,t})$ for $t \geq
0$. In the next Proposition we shall prove that there exists $T_i
>0$ such that $\Gamma_\e(W_{\e,T_i}^i) < - 2$ for any $\e >0$ sufficiently
small. Assuming this holds true, let  $\gamma_\e^i(s) = W^i_{\e,s}$
for $s > 0$ and $\gamma_\e^i(0) = 0.$  For $s = (s_1,\dots,s_k) \in
T = [0,T_1] \times \ldots \times [0,T_k]$ we define
\[
\gamma_\e(s) = \sum_{i=1}^kW^i_{\e,s_i} \quad \mbox{ and } \quad
 D_\e = \max_{s \in T } \Gamma_\e (\gamma_\e(s)).
 \]
Finally for each $i \in \{1,\dots,k\},$ let $E_{m_i} = L^c_{m_i}(U)$
for $U \in S_{m_i}$.  In what follows, we set $\displaystyle E_m =
\min_{i \in \{1, \ldots, k\}}E_{m_i}$ and $E = \sum_{i=1}^k E_{m_i}
$. For a set $A \subset H_\e$  and $\alpha
> 0$, we let $A^\alpha = \{u \in  H_\e \ | \   \| u - A \|_\ge \le
\alpha\}.$

\begin{prpstn}\label{prop2} We have
\begin{itemize}
\item[(i)] $\displaystyle \lim_{\e \to 0}D_\e = E,$
\item[(ii)] $\displaystyle  \limsup_{\e \to 0} \max_{s \in
\partial T} \Gamma_\e (\gamma_\e(s)) \le \tilde{E} = \max\{E
- E_{m_i} \ | \ i =1,\dots,k\} < E,$
\item[(iii)] for each $d> 0,$
there exists $\alpha >0$ such that for sufficiently small $\e > 0,$
\[
\Gamma_\e (\gamma_\e (s)) \geq D_{\e}- \alpha \text{ implies that }
\gamma_\e (s) \in X_\e^{d/2}.
\]
\end{itemize}
\end{prpstn}
\begin{proof} Since $\operatorname{supp}(\gamma_\e(s)) \subset \M_\e^{2\beta}$ for each $s
\in T,$ it follows that $\Gamma_\e(\gamma_\e(s))$ $=$ ${\mathcal
F}_\ge(\gamma_\e(s))$ $=$ $ \sum_{i=1}^k {\mathcal F}_\ge
(\gamma_\e^i(s))$. Now, for each $i \in \{1, \dots, k\}$, we claim
that
\begin{equation}\label{eq:3.1}
\lim_{\e \to 0} \int_{\R^N}\left| \bigg( \frac{\nabla}{i} -
A_\e(y)\bigg) W_{\e,s_i}^i\right|^2 dy = s_i^{N-2}\int_{\R^N}|\nabla
U_i|^2 dy.
\end{equation}
Indeed
\begin{equation}\label{inn1}
\int_{\R^N} \bigg| \bigg( \frac{\nabla}{i} - A_\e (y) \bigg)
W_{\ge, s_i}^i \bigg|^2  dy =
 \int_{\R^N} \left( |\nabla W_{\ge, s_i}^i|^2 + |A_\e (y)|^2 |W_{\ge, s_i}^i|^2
 - 2 \re \left[ \frac{1}{i} \nabla W_{\ge, s_i}^i \cdot A_\e( y)
\overline{W_{\ge,s_i}^i}\right] \right) dy
\end{equation}
with
\begin{eqnarray}\label{inn2}
\int_{\R^N}  |\nabla W_{\e, s_i}^i|^2 dy &=&  \int_{\R^N}  \left| i
A (x_i) U_i \left( \frac{y}{s_i} - \frac{x_i}{\e s_i} \right) \p_\e
\left( y- \frac{x_i}{\e} \right) + \frac{1}{s_i} \nabla U_i \left(
\frac{y}{s_i} -
\frac{x_i}{\e s_i} \right) \p_\e \left( y- \frac{x_i}{\e} \right) \right. \notag \\
 && \hspace{3cm}  {} +  \e \nabla_{\tau}
\p \left( \e y- x_i \right) U_i \left(
\frac{y}{s_i} - \frac{x_i}{\e s_i} \right) \bigg|^2 dy \notag \\
&=&  \int_{\R^N} | A(x_i) |^2 \left| U_i \left( \frac{y}{s_i}\right) \right|^2 \left| \p_{\e} \left( y \right)\right|^2 dy \notag \\
&&{}+ \int_{\R^N} \bigg| \frac{1}{s_i}\nabla U_i \left(
\frac{y}{s_i} \right)\p_\e \left( y \right) + \e \nabla_{\tau} \p
\left(\e y \right) U_i \left( \frac{y}{s_i}  \right) \bigg|^2 dy.
\end{eqnarray}
Moreover we have
\begin{equation}\label{inn3}
\int_{\R^N} |A_\ge( y)|^2 |W_{\e, s_i}^i|^2  dy = \int_{\R^N} |A_\e
(y)|^2 \left| U_i \left( \frac{y}{s_i} - \frac{x_i}{\e s_i} \right)
\right|^2 \left| \p_{\e} \left( y- \frac{x_i}{\e} \right)\right|^2
dy
\end{equation}
and
\begin{equation}\label{inn4}
\int_{\R^N}  \re \left[ \frac{1}{i} \nabla W_{\e, s_i}^i \cdot A_\e(
y)\overline{W_{\e,s_i}^i}\right] dy = \int_{\R^N} A_\ge (x_i) \cdot
A_\e (y) \left| U_i \left( \frac{y}{s_i} - \frac{x_i}{\e s_i}
\right) \right|^2 \left| \p_{\e} \left( y- \frac{x_i}{\e}
\right)\right|^2 dy.
\end{equation}
Since, as $\e \to 0$,
\[
\int_{\R^N} |A_\ge (y)|^2  \left| U_i \left( \frac{y}{s_i} -
\frac{x_i}{\e s_i} \right) \right|^2 \left| \p_{\e} \left( y-
\frac{x_i}{\e} \right)\right|^2 dy \to \int_{\R^N} | A(x_i) |^2
\left| U_i \left( \frac{y}{s_i}\right) \right|^2 dy,
\]
and
\[
\int_{\R^N} A_\ge (x_i) \cdot A_\e (y) \left| U_i \left(
\frac{y}{s_i} - \frac{x_i}{\e s_i} \right) \right|^2 \left| \p_{\e}
\left( y- \frac{x_i}{\e} \right)\right|^2 dy \to \int_{\R^N} |
A(x_i) |^2 \left| U_i \left( \frac{y}{s_i}\right) \right|^2 dy,
\]
taking into account $(\ref{inn1})$-$(\ref{inn4})$ it follows that,
\begin{equation}\label{inn6}
\int_{\R^N} \bigg| \bigg( \frac{\nabla}{i} - A_\e (y) \bigg) W_{\ge,
s_i}^i \bigg|^2 dy \to \frac{1}{s_i^2} \int_{\R^N} \left| \nabla U_i
\big(\frac{y}{s_i}\big)\right|^2 dy = s_i^{N-2}\int_{\R^N}|\nabla
U_i|^2 dy
\end{equation}
and this proves (\ref{eq:3.1}). Similarly using the exponential
decay of $U_i$ we have, as $\e \to 0$,
\begin{equation}\label{eq:3.2}
\int_{\R^N}V_\e(y) |W^i_{\e,s_i}|^2 dy \to \int_{\R^N}m_i \, \left|
U_i \left( \frac{y}{s_i} \right) \right|^2 dy = m_i s_i^N
\int_{\R^N}|U_i|^2 dy
\end{equation}
\begin{equation} \label{eq:3.3}
\int_{\R^N} F( |W^i_{\e,s_i}|^2) dy \to \int_{\R^N} F \left(
 \left| U_i \left(\frac{y}{s_i} \right) \right|^2 \right) dy =  s_i^N \int_{\R^N}F(|U_i|^2) dy.
\end{equation}
Thus, from (\ref{eq:3.1}), (\ref{eq:3.2}) and (\ref{eq:3.3}),
\begin{eqnarray*}
{\mathcal F}_\ge(\gamma_\e^i(s_i))  &=&
 \frac{1}{2}\int_{\R^N} \bigg| \bigg(
\frac{\nabla}{i} - A_\e (y) \bigg) \gamma_\e^i(s_i) \bigg|^2 dy  +
V_\e(y)|\gamma_\e^i(s_i)|^2 dy
-\intr F(|\gamma_\e^i(s_i)|^2) dy \\
&=& \frac{s_i^{N-2}}{2}\intr |\nabla U_i|^2 dy + s_i^{N} \intr
\frac{1}{2} m_i|U_i|^2 - F(|U_i|^2) dy + o(1).
\end{eqnarray*}
 Then, from the Pohozaev identity
(\ref{eq:2.8}), we see that
\[
{\mathcal F}_\ge(\gamma_\e^i(s_i)) = \left( \frac{s_i^{N-2}}{2} -
\frac{N-2}{2N}s_i^{N} \right) \intr |\nabla U_i|^2 dy + o(1).
\]
Also
\[
\max_{t \in (0,\infty)} \left( \frac{t^{N-2}}{2} -
\frac{N-2}{2N}t^{N} \right) \intr |\nabla U_i|^2 dy  =E_{m_i}.
\]
At this point we deduce that (i) and (ii) hold. Clearly also the
existence of a $T_i >0$ such that $\Gamma_\e(W_{\e,T_i}^i) <-2$ is
justified. To conclude we just observe that for $g(t) =
\frac{t^{N-2}}{2} - \frac{N-2}{2N}t^{N},$
\[
g^\prime(t)
\begin{cases}
 > 0 &\text{for $t \in (0,1)$},\\
= 0 &\text{for $t = 1$}, \\
< 0 &\text{for $t > 1$},
\end{cases}
\]
and $g^{\prime\prime}(1) = 2- N < 0.$
\end{proof}

Now let
\begin{equation} \label{pathi}
\Phi^i_\e = \{\gamma \in C([0,T_i],H_\e) | \gamma(s_i) =
\gamma^i_\e(s_i) \textrm { for } s_i = 0 \textrm{ or } T_i\}
\end{equation}
and
\[
C^i_\e = \inf_{\gamma \in \Phi^i_\e}\max_{s_i \in [0,T_i]
}\Gamma^i_\e(\gamma(s_i)).
\]
For future reference we need the following estimate.
\begin{prpstn}\label{prop3}
For~$i = 1,\ldots,k$,
\[
\liminf_{\e\to 0}C^i_\e \geq E_{m_i}.
\]
\end{prpstn}
\begin{proof}
Arguing by contradiction, we assume that $\liminf_{\ge \to 0}
C_\ge^i < E_{m_i}$. Then, there exists $\alpha >0$, $\ge_n \to 0$
and $\gamma_n \in \Phi^i_{\ge_n}$ satisfying $\Gamma^i_{\ge_n}
(\gamma_n(s)) < E_{m_i} - \alpha$ for $s \in (0,T_i)$.

We  fix an $\ge_n >0$ such that
\[
\frac{m_i}{2} \ge^{\mu}_n ( 1+ (1+ E_{m_i})^{2/(p+2)} ) <
\min\{\alpha, 1\}
\]
and ${\mathcal F}_{\ge_n}(\gamma_n(T_i)) < -2$ and denote $\ge_n$ by
$\ge$ and $\gamma_n$ by $\gamma$.

Since ${\mathcal F}_\ge(\gamma(0)) =0$ we can find $s_0 \in (0,1)$
such that ${\mathcal F}_\ge(\gamma(s))  \geq -1$ for $s \in [0,s_0]$
and ${\mathcal F}_\ge(\gamma(s_0))  = -1$. Then for any $s \in [0,
s_0]$ we have
\[
Q^i_\ge(\gamma(s)) \leq \Gamma^i_\ge (\gamma(s)) +1 \leq E_{m_i}
-\alpha +1
\]
so that
\[
\int_{\R^N \setminus O_\ge^i} |\gamma(s)|^2 dy \leq \ge^{6/\mu}
\big(1+ (1 + E_{m_i})^{2/(p+2)}\big) \quad \forall s \in [0,s_0].
\]
Now we notice that for any $s \in [0,T_i]$, $|\gamma(s)| \in
H^1(\R^N, \R)$ and by the Diamagnetic inequality  (\ref{eq:2.6})
\begin{equation}\label{diax}
\int_{\R^N} \big| \nabla |\gamma(s)| \big|^2 \, dy \leq \int_{\R^N}
\left| D^\ge \gamma(s) \right|^2 \, dy.
\end{equation}
Then by $(\ref{diax})$ we have that for $s \in [0,s_0]$
\begin{eqnarray}\label{beo}
{\mathcal F}_\ge(\gamma(s)) &=& \frac12 \int_{\R^N} \left| D^\ge
\gamma(s) \right|^2 \, dy + \frac{m_i}{2}  \int_{\R^N}
|\gamma(s)|^2   \, dy - \int_{\R^N} F(|\gamma(s)|^2)\ dy \notag \\
&& {} + \frac{1}{2}
\int_{\R^N} (V_\ge(y) -m_i) |\gamma(s)|^2  \, dy  \notag \\
&\geq& \frac12 \int_{\R^N} \big| \nabla |\gamma(s)| \big|^2 \, dy +
\frac{m_i}{2} \int_{\R^N} |\gamma(s)|^2   \, dy - \int_{\R^N}
F(|\gamma(s)|^2)\ dy  \notag \\
&& {}+ \frac{1}{2}
\int_{\R^N \setminus O_\e^i} (V_\ge(y) -m_i) |\gamma(s)|^2 \, dy \notag \\
&\geq&
 \frac12 \int_{\R^N} \big| \nabla |\gamma(s)| \big|^2 \, dy +
\frac{m_i}{2}  \int_{\R^N} |\gamma(s)|^2   \, dy - \int_{\R^N}
F(|\gamma(s)|^2)\ dy  \notag \\
&& {} - \frac{m_i}{2}
\int_{\R^N \setminus O_\e^i} |\gamma(s)|^2  \, dy \notag \\
&\geq&
 \frac12 \int_{\R^N} \big| \nabla |\gamma(s)|\big|^2 \, dy +
\frac{m_i}{2}  \int_{\R^N} |\gamma(s)|^2   \, dy - \int_{\R^N}
F(|\gamma(s)|^2)\ dy \notag \\
&& {} - \frac{m_i}{2} \ge^{6/\mu} \big(1+ (1 + E_{m_i})^{2/(p+2)}\big) \notag \\
&=& L_{m_i}(|\gamma(s)|)  - \frac{m_i}{2} \ge^{6/\mu} \big(1+ (1 +
E_{m_i})^{2/(p+2)}\big).
\end{eqnarray}
Thus,  $L_{m_i}(|\gamma(s_0)|) < 0$ and recalling that for the
limiting equation (\ref{eq:2.7}) the mountain pass level corresponds
to the least energy level (see \cite{JT2}) we have that
\[
\max_{s \in [0,T_i] }L_{m_i}(|\gamma(s)|) \geq E_{m_i}.
\]
Then we infer that
\begin{eqnarray}\label{beo1}
E_{m_i} - \alpha &\geq& \max_{s \in [0,T_i]} \Gamma^i_\ge
(\gamma(s)) \geq
\max_{s \in [0,T_i]} {\mathcal F}_\ge (\gamma(s)) \notag \\
&\geq& \max_{s \in
[0,s_0]} {\mathcal F}_\ge (\gamma(s)) \notag \\
&\geq& \max_{s \in [0,s_0]} L_{m_i}(|\gamma(s)|)  - \frac{m_i}{2}
\ge^{6/\mu} \left( 1+ (1 + E_{m_i})^{2/(p+2)} \right)
\notag \\
&\geq& E_{m_i} -  \frac{m_i}{2} \ge^{6/\mu} \left( 1+ (1 +
E_{m_i})^{2/(p+2)} \right)
\end{eqnarray}
and this contradiction completes the proof.
\end{proof}

Now we define \[ \Gamma_\e^\alpha = \{ u \in H_\e \ | \ \Gamma_\e(u)
\le \alpha\}.
\]
\begin{prpstn}\label{prop40}
Let $(\e_j)$ be such that $\lim_{j \to \infty}\e_j = 0$ and
$(u_{\e_j}) \in X_{\e_j}^{d}$ such that
\begin{equation}\label{ass}
\lim_{j \to \infty}\Gamma_{\e_j}(u_{\e_j}) \le E \mbox{ and }
\lim_{j \to \infty}\Gamma_{\e_j}^\prime(u_{\e_j}) = 0.
\end{equation}
Then, for sufficiently small $d
> 0,$ there exist, up to a subsequence, $(y^i_j ) \subset \R^N$, $i
=1,\dots,k$, points $x^i\in \M^i$ (which should not be confused with
the points $x_i$ already introduced), $U_i \in S_{m_i}$ such that
\begin{equation}\label{ass1}
\lim_{j \to \infty} |\e_j y^i_j - x^i| = 0  \mbox{ and } \lim_{j \to
\infty}\Vert u_{\varepsilon_j} - \sum_{i=1}^k e^{iA_\e(y^i_j)(\cdot
-y_j^i)}\p_{\e_j}(\cdot - y^i_j) U_i(\cdot - y^i_j)\Vert_{\e_j} = 0.
\end{equation}
\end{prpstn}
\begin{proof} For simplicity we write $\e$
for $\e_j.$ From Proposition \ref{Prop1}, we know that the $S_{m_i}$
are compact. Then there exist $Z_i \in S_{m_i}$ and
$(x_\varepsilon^i) \subset (\M^i)^\beta$, $x^i \in (\M^i)^\beta$ for
$i =1,\ldots,k$, with $x^i_\varepsilon \to x_i$ as $\varepsilon \to
0$ such that, passing to a subsequence still denoted $(u_\e)$,
\begin{equation}
\label{401} \left\| u_\e - \sum_{i=1}^k e^{iA(x^i)(\cdot-
\frac{x^i_\e}{\e})}\p_{\e}(\cdot - x^i_\e/\e)Z_i(\cdot - x^i_\e/\e)
\right\|_\e \le 2d
\end{equation}
for small $\e > 0.$ We set $u_{1,\e} = \sum_{i=1}^k \p_\e(\cdot -
x_\e^i/\e)u_\e$ and $u_{2,\e} = u_\e -u_{1,\e}$. As a first step in
the proof of the Proposition we shall prove that
\begin{equation}
\label{402}\Gamma_\e(u_\e) \geq \Gamma_\e(u_{1,\e}) +
\Gamma_\e(u_{2,\e}) + O(\e).
\end{equation}
Suppose there exist $y_\e \in \bigcup_{i=1}^{k}
B(x^i_\e/\e,2\beta/\e) \setminus B(x^i_\e/\e,\beta/\e)$ and $R>0$
satisfying
\[
\liminf_{\e \to 0}\int_{B(y_\e,R)}|u_\e|^2 dy
> 0
\]
which means that
\begin{equation}\label{4022}
\liminf_{\e \to 0}\int_{B(0,R)}|v_\e|^2 dy
> 0
\end{equation}
where $v_\e(y) = u_\e(y+y_\e)$. Taking a subsequence, we can assume
that $\e y_\e \to x_0 $ with $x_0$ in the closure of
$\bigcup_{i=1}^k B(x^i,2\beta) \backslash B(x^i,\beta)$. Since
(\ref{401}) holds, $(v_\e)$ is bounded in $H_\e$. Thus, since
$\tilde{m} >0,$ $(v_\e)$ is bounded in $L^2(\R^N,\C)$ and using the
Diamagnetic inequality (\ref{eq:2.6}) we deduce that $(v_\e)$ is
bounded in $L^{p+2}(\R^N, \C)$. In particular, up to a subsequence,
$v_\e \to W \in L^{p+2}(\R^N, \C)$ weakly. Also by Corollary
\ref{equiv-norm-consequence} i), for any compact $K \subset \R^N$,
$(v_\e)$ is bounded in $H^1(K, \C)$. Thus we can assume that $v_\e
\to W$ in $H^1(K, \C)$ weakly for any $K \subset \R^N$ compact,
strongly in $L^{p+2}(K,\mathbb{C})$. Because of (\ref{4022}) $W$ is
not the zero function. Now, since $\lim_{\e \to 0}\Gamma'_\e(u_{\e})
=0,$ $W$ is a non-trivial solution of
\begin{equation}\label{3.100}
- \Delta  W  - \frac{2}{i} A(x_0) \cdot \nabla W + |A(x_0)|^2 W +
V(x_0) W = f(|W|^2) W.
\end{equation}
From (\ref{3.100}) and since $W \in L^{p+2}(\R^N, \C)$ we readily
deduce, using Corollary \ref{equiv-norm-consequence} ii) that $W \in
H^1(\R^N, \C).$ \medskip

Let $ \omega(y)= e^{- i A(x_0) y} W(y)$. Then $\omega$ is a non
trivial solution of the  complex-valued equation
\[
-\Delta  \omega + V(x_0) \omega (y) = f(|\omega|^2) \omega.
\]
For $R>0$ large we have
\begin{equation}\label{happy}
\int_{B(0, R)} \left| \left( \frac{\nabla}{i} - A(x_0) \right) W
\right|^2 dy \geq \frac{1}{2}\int_{\R^N} \left|
\left(\frac{\nabla}{i} - A(x_0)\right) W \right|^2 dy
\end{equation}
and thus, by the weak convergence,
\begin{eqnarray}\label{nnn}
\liminf_{\ge \to 0} \int_{B(y_\ge, R)} | D^\ge u_\ge|^2 dy &=&
\liminf_{\ge \to 0} \int_{B(0, R)} \left| \left( \frac{\nabla}{i} -
A_\e(y+ y_\ge) \right)
v_\ge \right|^2 dy \notag \\
&\geq& \int_{B(0, R)} \left| \left( \frac{\nabla}{i} - A(x_0)
\right) W
\right|^2 dy \notag \\
&\geq& \frac{1}{2} \int_{\R^N} \left| \left(\frac{\nabla}{i} -
A(x_0)\right) W \right|^2 dy = \frac{1}{2} \int_{\R^N} |\nabla
\omega |^2 dy.
\end{eqnarray}
Now recalling from \cite{JT2} that $E_a > E_b$ if $a >b$ and using
Lemma \ref{levels} we have $L^c_{V(x_0)} (\omega) \geq E^c_{V(x_0)}
= E_{V(x_0)} \geq E_m$ since $V(x_0) \geq m$. Thus from (\ref{nnn})
and (\ref{eq:2.9}) we get that
\begin{equation} \label{eq:no}
\liminf_{\ge \to 0} \int_{B(y_\ge, R)} | D^\ge u_\ge|^2 dy \geq
\frac{N}{2} L^c_{V(x_0)} (\omega) \geq \frac{N}{2}E_m>0 .
\end{equation}
which contradicts (\ref{401}), provided $d>0$ is small enough.
Indeed, $x_0 \neq x^i$, $\forall i \in \{1,...,k\}$ and the $Z_i$
are exponentially decreasing.
\medskip

Since such a sequence $(y_{\varepsilon})$ does not exist, we deduce
from \cite[Lemma I.1]{L} that
\begin{equation}
\label{404} \limsup_{\e \to 0}\int_{ \bigcup_{i=1}^k B(x_\e^i/\e,
2\beta/\e) \setminus B(x^i_\e/\e, \beta/\e)}|u_\e|^{p+2} dy = 0.
\end{equation}
As a consequence, we can derive using \textbf{(f1)}, \textbf{(f2)}
and the boundedness of $(\|u_\e\|_2)$ that
\[
\lim_{\e \to 0}\intr F(|u_\e|^2)- F(|u_{1,\e}|^2) -F(|u_{2,\e}|^2)
dy =0.
\]
At this point writing
\begin{multline*}
\Gamma_\e(u_\e) = \Gamma_\e(u_{1,\e}) + \Gamma_\e(u_{2,\e})
+\sum_{i=1}^k \int_{B(x^i_\e /\e, 2\beta/\e) \setminus B(x^i_\e /\e,
\beta/\e)}
 \p_\e (y-x^i_\e /\ge)(1- \p_\e (y-x^i/\ge))|D^{\e} u_\e|^2 \\
 {} + V_\e \p_\e (y-x^i_\e/\ge) (1- \p_\e (y-x^i_\e /\ge))|u_\e|^2 dy
 - \intr F(|u_\e|^2)- F(|u_{1,\e}|^2) -F(|u_{2,\e}| ^2) dy + o(1),
\end{multline*}
as $\e \to 0$ this shows that the inequality (\ref{402}) holds. We
now estimate $\Gamma_\e(u_{2,\e})$. We have
\begin{eqnarray}
  \Gamma_\e(u_{2, \e})
\geq {\mathcal F}_\ge(u_{2, \e}) & = & \frac{1}{2}
\int_{\R^N}|D^{\e} u_{2, \e}|^2 + \tilde{V}_\e |u_{2, \e}|^2 dy -
\frac{1}{2}\int_{\R^N}(\tilde{V}_\e - V_\e) |u_{2, \e}|^2dy
\nonumber -\int_{\R^N}F(|u_{2, \e}|^2)
dy  \nonumber \\
& \geq & \frac{1}{2}\Vert u_{2, \e}\Vert_\e^2 -
\frac{\tilde{m}}{2}\int_{\R^N \setminus O_\e^i}|u_{2, \e}|^2 dy -
\int_{\R^N}F(|u_{2, \e}|^2) dy.  \label{405}
\end{eqnarray}
Here we have used the fact that $\tilde V_\e-V_\e=0$ on $O_\e^i$ and
$|\tilde V_\e-V_\e|\leq \widetilde m$ on $\R^N\setminus O_\e^i$.
Because of \textbf{(f1)}, \textbf{(f2)} for some $C_1, C_2
>0$,
\begin{equation*}
\intr F(|u_{2, \e}|^2) dy \leq \frac{\tilde{m}}{4} \intr |u_{2,
\e}|^2 dy + C_1 \intr |u_{2, \e}|^{\frac{2N}{N-2}} dy
 \leq \frac{\tilde{m}}{4} \intr |u_{2, \e}|^2 dy + C_2
 \|u_{2, \e}\|_\e^{\frac{2N}{N-2}}.
\end{equation*}
Since $(u_\e)$ is bounded, we see from (\ref{401}) that $\Vert u_{2,
\e} \Vert_\e \le 4d$ for small $\e
>0$. Thus taking $d>0$ small enough we have
\begin{equation}
\label{406} \frac{1}{2}\|u_{2, \e}\|_\e^2 -  \intr F(|u_{2, \e}|^2)
dy \geq \|u_{2, \e}\|_\e^2 \Big (\frac{1}{4} -C_2(4d)^{4/(N-2)}\Big
) \geq \frac{1}{8} \|u_{2, \e}\|_\e^2.
\end{equation}
Now note that ${\mathcal F}_\ge$ is uniformly bounded in $X_\e^d$
for small $\e
> 0.$ Thus, so is $Q_\e.$ This implies that for some $C > 0,$
\begin{equation} \label{407}
\int_{\RN \setminus O_\e} |u_{2, \e}|^2 dy \le C\e^{6 / \mu}
\end{equation}
and from (\ref{405})-(\ref{407}) we deduce that $\Gamma_\e(u_{2,
\e}) \geq o(1).$
\medskip

Now for $i = 1,\ldots,k,$ we define $u_{1, \e}^i(y) = u_{1,\e}(y)$
for $y \in O_\e^i, $ $u_{1, \e}^i(y) = 0$ for $y \notin O_\e^i$.
Also we set $ W_\e^i (y) = u_{1, \e}^i(y + x^i_\e /\e).$ We fix an
arbitrary $i \in \{1, \ldots, k\}$. Arguing as before, we can
assume, up to a subsequence, that $W_{\e}^i$ converges weakly in
$L^{p+2}(\R^N, \C)$ to a solution $W^i \in H^1(\RN, \C)$ of
\[
- \Delta  W^i - \frac{2}{i} A(x^i) \cdot \nabla W^i + |A(x^i)|^2 W^i
+ V(x^i) W^i = f(|W^i|^2) W^i, \quad y\in \R^N.
\]
We shall prove that $W_\ge^i$ tends to $W^i$ strongly in $H_\ge$.
Suppose there exist $R>0$ and a sequence $(z_\ge)$ with $z_\ge \in
B(x^i_\e /\ge, 2 \beta /\ge)$ satisfying
\[
\liminf_{\ge \to 0} |z_\ge - x^i_\e/\ge| = \infty \quad \hbox{and}
\quad \liminf_{\ge \to 0} \int_{B(z_\ge , R)} |u^{1,i}_\ge|^2 \, dy
>0.
\]
We may assume that $\ge z_\ge \to z^i \in O^i$ as $\ge \to 0$. Then
$\tilde W_\ge^i(y)= W_\ge^i (y + z_\ge)$ weakly converges in
$L^{p+2} (\R^N, \C)$ to $\tilde{W}^i \in H^1(\R^N, \C)$ which
satisfies
\[
- \Delta  \tilde W^i - \frac{2}{i} A(z^i) \cdot \nabla \tilde W +
|A(z^i)|^2 \tilde W^i + V(z^i) \tilde W^i = f(|\tilde W^i|^2) \tilde
W^i, \quad y \in \R^N
\]
and as before we get a contradiction. Then using \textbf{(f1)},
\textbf{(f2)} and \cite[Lemma I.1]{L}  it follows that
\begin{equation}\label{3.101}
\int_{\R^N} F(|W_\ge^i|^2)dy \to \int_{\R^N} F(|W^i|^2)dy.
\end{equation}
Then from the weak convergence of $W_\e^i$ to $W^i \neq 0$ in
$H^1(K, \C)$ for any $K \subset \R^N$ compact we get, for any $i \in
\{1, \ldots, k \}$,
\begin{eqnarray}
\limsup_{\e \to 0}\Gamma_\e(u_{1, \e}^i)
 &\geq& \liminf_{\e \to 0} {\mathcal F}_\ge(u_{1, \e}^i)  \nonumber \\
&\geq& \liminf_{\e \to 0}  \frac{1}{2} \int_{B(0, R)}
\bigg|\left(\frac{\nabla}{i} - A(\e y + x^i)\right) W_{\e}^i\bigg|^2
\notag \\
&&{}+ V(\e y + x^i )|W^i_{\e}|^2 dy -\hfill \intr F(|W_{\e}^i|^2)dy  \nonumber \\
& \geq & \frac{1}{2} \int_{B(0, R)} \bigg|\left(\frac{\nabla}{i} -
A(x^i)\right) W^i\bigg|^2 +V(x^i)|W^i|^2 dy \notag \\
&& \hfill {}- \intr F(|W^i|^2) dy. \label{411}
\end{eqnarray}
 Since these inequalities hold for any $R >0$ we
deduce, using Lemma \ref{levels}, that
\begin{eqnarray}\label{200}
\limsup_{\ge \to 0} \Gamma_\ge (u_{1, \e}^i) & \geq & \frac{1}{2}
\int_{\R^N} \left| \left(\frac{\nabla}{i} - A(x^i)\right)
W^i \right|^2 dy + \frac{1}{2}\int_{\R^N} V(x^i) |W^i|^2 dy \notag \\
&& {} - \int_{\R^N} F(|W^i|^2)dy \nonumber \\
& = & \frac{1}{2} \int_{\R^N} |\nabla  \omega^i |^2  + V(x^i)
|\omega^i|^2 dy - \int_{\R^N} F(|\omega^i|^2)dy  \nonumber \\
& = & L^c_{V(x^i)}(\omega^i) \geq E_{m_i}^c=E_{m_i}
\end{eqnarray}
where we have set $\omega^i(y) = e^{-i A(x^i)y} W^i(y)$. Now by
(\ref{402}),
\begin{equation} \label{412}
\limsup_{\e \to 0} \Big ( \Gamma_\e (u_{2,\e}) + \sum_{i =1}^k
\Gamma_\e (u_{1,\e}^i) \Big ) =  \limsup_{\e \to 0} \Big ( \Gamma_\e
(u_{2,\e}) +
 \Gamma_\e (u_{1,\e}) \Big )
\leq  \limsup_{\e \to 0}\Gamma_\e (u_\e) \leq E = \sum_{i=1}^k
E_{m_i}.
\end{equation}
Thus, since $\Gamma_\e (u_{2, \e}) \geq o(1)$ we deduce from
(\ref{200})-(\ref{412}) that, for any $ i \in \{1, \ldots k \}$
\begin{equation}
\label{413} \lim_{\e \to 0} \Gamma_\e (u_{1, \e}^i) = E_{m_i}.
\end{equation}
Now (\ref{200}), (\ref{413}) implies that $L_{V(x^i)}(\omega^i) =
E_{m_i}$. Recalling from  \cite{JT2} that $E_a >E_b$ if $a >b$ and
using Lemma \ref{levels} we conclude that $x^i \in \M^i$. At this
point it is clear that $W^i(y) = e^{i A(x^i)y}U_i(y - z_i)$ with
$U_i \in S_{m_i}$ and $z_i \in \R^N.$
\smallskip

To establish that $W_\e^i \to W^i$ strongly in $H_\e$ we first show
that $W_\e^i \to W^i$ strongly in $L^2(\R^N, \C)$. Since $(W_\e^i)$
is bounded in $H_\e$ the Diamagnetic inequality (\ref{eq:2.6})
immediately yields that $(|W_\e^i|)$ is bounded in $H^1(\R^N, \R)$
and we can assume that $|W_\e^i| \to |W^i| = |\omega^i|$ weakly in
$H^1(\R^N, \R)$. Now since $L_{V(x^i)}(\omega^i) = E_{m_i}$, we get
using the Diamagnetic inequality, (\ref{3.101}), (\ref{413}) and the
fact that $V \geq V(x^i)$ on $O^i$,
\begin{eqnarray}\label{sopxxxx}
\int_{\R^N} | \nabla \omega^i |^2 dy &+& \int_{\R^N} m_i
|\omega^i|^2 dy -2 \int_{\R^N} F(|\omega^i|^2)dy \notag \\ &\geq&
\limsup_{\ge \to 0}  \int_{\R^N} \left| \left(\frac{\nabla}{i} -
A(\ge y + x^i) \right)
W_\ge^i \right|^2 dy + \int_{\R^N} V(\ge y + x^i) |W_\ge^i|^2  dy \notag \\
&& \qquad {}- 2 \int_{\R^N} F(|W_\ge^i|^2) dy \notag \\
&\geq& \limsup_{\ge \to 0}  \int_{\R^N} \big| \nabla | W_\ge^i|
\big|^2 dy + \int_{\R^N} V(x^i) |W_\ge^i|^2dy - 2 \int_{\R^N}
F(|W_\ge^i|^2) dy \notag \\
&\geq&  \int_{\R^N} \big| \nabla |\omega^i| \big|^2 dy + \int_{\R^N}
m_i |\omega^i|^2 dy - 2 \int_{\R^N} F(|\omega^i|^2)dy .
\end{eqnarray}
But from Lemma \ref{levels} we know that, since
$L_{V(x^i)}(\omega^i) = E_{m_i}$,
$$ \int_{\R^N} \big| \nabla |\omega^i| \big|^2 dy = \int_{\R^N} \big| \nabla \omega^i \big|^2
dy.$$ Thus we deduce from (\ref{sopxxxx}) that
\begin{equation}\label{1000}
\int_{\R^N}V(\ge y + x^i) |W_\ge^i|^2 dy \to \int_{\R^N} V(x^i)
|W^i|^2 dy.
\end{equation}
Thus, since $V \geq V(x^i)$ on $O^i$, we deduce that
\begin{equation} \label{strongly}
W_\e^i \to W^i \mbox{ strongly in } L^2(\R^N, \C).
\end{equation}
From (\ref{strongly}) we easily get that
\begin{equation} \label{strongly0}
\lim_{\ge \to 0}   \int_{\R^N} \left| \left(\frac{\nabla}{i} - A(\ge
y + x^i)\right) W_\ge^i \right|^2 - \left| \left(\frac{\nabla}{i} -
A( x^i)\right) W_\ge^i \right|^2 dy = 0.
\end{equation}
Now, using (\ref{3.101}), (\ref{sopxxxx}) and (\ref{1000}), we see
from (\ref{strongly0}) that
\begin{multline}\label{sopxxx}
\int_{\R^N} \left| \left(\frac{\nabla}{i} - A(x^i)\right)
W^i \right|^2 dy + \int_{\R^N} V(x^i) |W^i|^2 dy  \\
\geq \limsup_{\ge \to 0}  \int_{\R^N} \left| \left(\frac{\nabla}{i}
- A(\ge y  + x^i)\right)
W_\ge^i \right|^2 dy + \int_{\R^N} V(\ge y + x^i) |W_\ge^i|^2  dy  \\
\geq \limsup_{\ge \to 0}  \int_{\R^N} \left| \left(\frac{\nabla}{i}
- A( x^i)\right)W_\e^i \right|^2 dy + \int_{\R^N} V(x^i) |W_\ge^i|^2
\, dy.
\end{multline}
At this point and using Corollary \ref{equiv-norm-consequence} ii)
we have established the strong convergence $W_\e^i \to W^i$ in
$H^1(\R^N, \C)$. Thus we have
\[
u_{1, \e}^i = e^{iA(x^i)(\cdot - x^i_\e / \e) } U_i(\cdot - x^i_\e /
\e - z_i) + o(1)
\]
strongly in $H^1(\R^N, \C)$. Now setting $y_\e^i = x^i_\e/\e +z_i$
and changing $U_i$ to $e^{i A(x^i) z_i}U_i$ we get that
\[
u_{1, \e}^i = e^{i A(x^i) (\cdot - y_\e^i)} U_i(\cdot - y_\e^i) +
o(1)
\]
strongly in $H^1(\R^N, \C)$. Finally using the exponential decay of
$U_i$ and $\nabla U_i$ we have
\[
u_{1, \e}^i = e^{i A_\e(y_\e^i) (\cdot - y_\e^i)} \p_{\e}(\cdot -
y_\e^i)U_i(\cdot - y_\e^i) + o(1).
\]
From Corollary \ref{equiv-norm-consequence} iii) we deduce that this
convergence also holds in $H_\e$ and thus
\[
u_{1,\e} = \sum_{i=1}^k u_{1, \e}^i = \sum_{i=1}^k  e^{i
A_\e(y_\e
^i) (\cdot - y_\e^i)} \p_{\e}(\cdot - y_\e^i)U_i(\cdot -
y_\e^i) + o(1)
\]
strongly in $H_\e$. To conclude the proof of the Proposition, it
suffices to show that $u_{2, \e} \to 0$ in $H_\e$. Since $E \geq
\lim_{\e \to 0} \Gamma_\e(u_\e)$ and $\lim_{\e \to 0}
\Gamma_\e(u_{1, \e})= E$ we deduce, using (\ref{402}) that $\lim_{\e
\to 0} \Gamma_\e(u_{2,\e})= 0$. Now from (\ref{405})-(\ref{407}) we
get that $u_{2, \e} \to 0$ in $H_\e$.
\end{proof}
\begin{prpstn}\label{prop4}
For sufficiently small $d> 0,$ there exist constants $\omega > 0$
and $\e_0
> 0$ such that $|\Gamma_\e^\prime(u)| \geq \omega$ for $u \in
\Gamma^{D_{\e}}_\e \cap (X_\e^{d} \setminus X_\e^{d/2})$ and $\e \in
(0,\e_0).$
\end{prpstn}
\begin{proof} By contradiction, we suppose that for
$d > 0$ sufficiently small such that Proposition \ref{prop40}
applies, there exist $(\e_j)$ with $\lim_{j \to \infty}\e_j = 0$ and
a sequence $(u_{\e_j})$ with $u_{\e_j} \in X_{\e_j}^{d} \setminus
X_{\e_j}^{d/2}$ satisfying $\lim_{j \to
\infty}\Gamma_{\e_j}(u_{\e_j}) \le E$ and $\lim_{j \to
\infty}\Gamma_{\e_j}^\prime(u_{\e_j}) = 0.$  By Proposition
\ref{prop40}, there exist $(y^i_{\e_j}) \subset \R^N$, $i
=1,\ldots,k,$ $x^i\in \M^i$, $U_i \in S_{m_i}$ such that
\begin{equation*}
\lim_{\e_j \to 0} |\e_j y^i_{\e_j} - x^i| = 0,
\end{equation*}
\begin{equation*}
\lim_{\e_j \to 0} \Big\Vert u_{\varepsilon_j} - \sum_{i=1}^k
e^{iA_{\e_j}(y^i_{\e_j}) (\cdot - y^i_{\e_j})} \p_{\e_j}(\cdot -
y^i_{\e_j}) U_i(\cdot - y^i_{\e_j}) \Big\Vert_{\e_j} = 0.
\end{equation*}
By definition of $X_{\e_j}$ we see that $\lim_{\e_j \to
0}\operatorname{dist}(u_{\e_j},X_{\e_j}) = 0.$ This contradicts that
$u_{\ge_j} \not \in X_{\ge_j}^{d/2}$ and completes the proof.
\end{proof}

>From now on we fix a $d>0$ such that Proposition \ref{prop4} holds.

\begin{prpstn}\label{prop7}
For sufficiently small fixed $\e >  0,$  $\Gamma_\e$ has a critical
point $u_{\e} \in X_\e^{d} \cap \Gamma_{\e}^{D_{\e}}.$
\end{prpstn}

\begin{proof}
We can take $R_0>0$ sufficiently large so that $O \subset B(0,R_0)$
and $\gamma_\varepsilon(s) \in H^1_0(B(0, R/\varepsilon))$ for any
$s \in T$, $R > R_0$ and sufficiently small $\varepsilon
>0$.

We notice that by Proposition \ref{prop2} (iii), there exists
$\alpha \in (0, E- \tilde{E})$ such that for sufficiently small $\e
> 0$,
\[
\Gamma_\e (\gamma_\e(s)) \geq D_\e - \alpha \quad \Longrightarrow
\quad \gamma_\e(s) \in X_{\e}^{d/2} \cap H^1_0(B(0,R/\varepsilon)).
\]
We begin to show that for sufficiently small fixed $\e > 0$, and
$R>R_0$, there exists a sequence $(u_n^R) \subset X_{\e}^{d/2} \cap
\Gamma_{\e}^{D_{\e}} \cap H^1_0(B(0,R/\varepsilon))$ such that
$\Gamma'(u_n^R) \to 0$ in $H^1_0 (B(0,R/\varepsilon))$ as $n \to +
\infty$.

Arguing by contradiction, we suppose that for sufficiently small $\e
> 0,$ there exists $a_R(\e) >0$ such that $|\Gamma_\e'(u)| \geq a_R(\e)$
on $X_\e^{d} \cap \Gamma_{\e}^{D_\e} \cap H^1_0
(B(0,R/\varepsilon))$. In what follows any $u \in H^1_0
(B(0,R/\varepsilon))$ will be regarded as an element in
$H_\varepsilon$ by defining $u=0$ in $\R^N \setminus
B(0,R/\varepsilon)$.

Note from Proposition \ref{prop4} that there exists $\omega
> 0,$ independent of $\e > 0,$ such that $|\Gamma_\e^\prime(u)| \geq
\omega$ for $u \in \Gamma_{\e}^{D_{\e}} \cap (X_\e^{d} \setminus
X_\e^{d/2}).$ Thus, by a deformation argument in $H^1_0
(B(0,R/\varepsilon))$, starting from $\gamma_\e$, for sufficiently
small $\e
>0$ there exists a $\mu \in (0,\alpha)$ and a path $\gamma \in
C([0, T], H_\e)$ satisfying
\[
\gamma(s) = \gamma_\e(s) \quad\text{for $\gamma_\e(s) \in
\Gamma_\e^{D_\e - \alpha}$},
\]
\[
\gamma(s) \in  X_\e^{d} \quad \text{ for $\gamma_\e(s) \notin
\Gamma_\e^{D_\e - \alpha}$}
\]
and
\begin{equation} \label{61}
\Gamma_\e(\gamma(s)) < D_\e - \mu, \quad s \in T.
\end{equation}

Let $\psi \in C_0^\infty(\RN)$ be such that $\psi(y) = 1$ for $y \in
O^{\delta},$ $\psi(y) = 0$ for $y \notin O^{2\delta},$ $\psi(y) \in
[0,1]$ and $|\nabla \psi| \le 2/\delta.$ For $\gamma(s) \in X_\e^d,$
we define $\gamma_1(s) = \psi_\e\gamma(s)$ and $\gamma_2(s) =
(1-\psi_\e)\gamma(s)$ where $\psi_\e (y) = \psi(\e y)$.
 Note that
\begin{multline*}
\Gamma_\e(\gamma(s))  =  \Gamma_\e(\gamma_1(s)) +
\Gamma_\e(\gamma_2(s)) +
\intr \bigl(\psi_\e(1-\psi_\e)|D^\e \gamma(s)|^2 +  V_\e \psi_\e(1-\psi_\e)|\gamma(s)|^2 \bigr)dy \\
{} + Q_\e(\gamma(s)) - Q_\e(\gamma_1(s)) - Q_\e(\gamma_2(s)) - \intr
\bigl(F(|\gamma(s)|^2) -  F(|\gamma_1(s)|^2) - F(|\gamma_2(s)|^2)
\bigr) dy + o(1).
\end{multline*}
Since for $A,B \geq 0,$ $(A+B-1)_+ \geq (A-1)_+ + (B-1)_+$ and since
$p+2 \geq 2$ it follows that
\begin{eqnarray*} Q_\e(\gamma(s))
 & =  & \Big (\intr \chi_\e|\gamma_1(s)+\gamma_2(s)|^2 dy - 1\Big )_+^{\frac{p+2}{2}} \\
 & \geq &  \Big (\intr \chi_\e|\gamma_1(s)|^2 dy + \intr \chi_\e |\gamma_2(s)|^2 dy - 1\Big )_+^{\frac{p+2}{2}} \\
 & \geq & \Big (\intr \chi_\e|\gamma_1(s)|^2 dy - 1\Big
)_+^{\frac{p+2}{2}} +
 \Big (\intr \chi_\e|\gamma_2(s)|^2 dy - 1\Big )_+^{\frac{p+2}{2}}\\
 & = & Q_\e(\gamma_1(s)) + Q_\e(\gamma_2(s)).
\end{eqnarray*}
Now, as in the derivation of (\ref{407}), using the fact that
$Q_\e(\gamma(s))$ is uniformly bounded we have, for some $C>0$
\begin{equation}
\label{4007} \int_{\RN \setminus O_\e} |\gamma(s)|^2 dy \le C\e^{6 /
\mu}.
\end{equation} Thus
denoting $p+2 = 2s +(1-s)\frac{2N}{N-2}, s \in (0,1),$ we see from
(f1), (f2), (\ref{4007}) and using the Sobolev inequalities, that
for some $C_1, C_2> 0,$
\begin{eqnarray}
 \int_{\R^N
\setminus O_\e} F(\gamma(s)) dy  & \leq & C_1
\int_{\R^N \setminus O_\e} |\gamma(s)|^2 + |\gamma(s)| ^{p+2} dy  \nonumber \\
& \le & C_1 \int_{\R^N \setminus O_\e} |\gamma(s)|^2 dy \nonumber
\\
 & + &      C_2 \Big (\int_{\R^N \setminus O_\e} |\gamma(s)|^2 dy\Big )^{s}
       \Vert \gamma(s) \Vert_\e^{(1-s)\frac{2N}{N-2}}. \label{408}
\end{eqnarray}
 We deduce that
\begin{equation}
\label{4009} \lim_{\e \to 0} \int_{\R^N \setminus
O_{\e}}F(\gamma(s)) dy = 0.
\end{equation}
Now, as $\e \to 0$,
\begin{eqnarray*}
 \intr |F(\gamma(s)) -  F(\gamma_1(s)) - F(\gamma_2(s))| dy
 &= & \int_{(O^{2\delta})_\e \setminus (O^\delta)_\e}
|F(\gamma(s)) - F(\gamma_1(s))
- F(\gamma_2(s))|dy  \\
&  \leq & \int_{(O^{2\delta})_\e \setminus (O^\delta)_\e}
F(\gamma(s)) + F(\gamma_1(s)) + F(\gamma_2(s)) dy = o(1)
\end{eqnarray*} since
(\ref{4009}) obviously hold when $\gamma(s)$ is replaced by
$\gamma_1(s)$ or $\gamma_2(s)$. Thus, we see that, as $\e \to 0$,
$$ \Gamma_\e(\gamma(s)) \geq \Gamma_\e(\gamma_1(s)) +
\Gamma_\e(\gamma_2(s)) + o(1).$$ Also
\[ \Gamma_\e(\gamma_2(s)) \geq - \int_{\R^N \setminus O_\e}
F(\gamma_2(s)) dy  \geq o(1).\] Therefore it follows that
\begin{equation}
\label{62} \Gamma_\e(\gamma(s)) \geq  \Gamma_\e(\gamma_1(s)) + o(1).
\end{equation} \smallskip

For $i = 1,\cdots,k,$ we define $\gamma_1^{i}(s)(y) =
\gamma_1(s)(y)$ for $y $ $\in$ $(O^i)^{2\delta}_\e,$
$\gamma_1^{i}(s)(y)$ $=$ $0$ for $y \notin (O^i)^{2\delta}_\e.$ Note
that $(A_1 +\cdots +A_n - 1)_+ \geq \sum_{i=1}^n(A_i-1)_+$ for
$A_1,\cdots,A_n \geq 0,$ and that $(p+2) \geq 2.$ Then, we see that,
\begin{equation}
\label{63} \Gamma_\e(\gamma_1(s)) \geq
\sum_{i=1}^k\Gamma_\e(\gamma_1^{i}(s))
                         = \sum_{i=1}^k\Gamma^i_\e(\gamma_1^{i}(s)).
\end{equation}

>From Proposition \ref{prop2} (ii) and since $\alpha \in (0, E-
\tilde{E})$ we get that $\gamma_1^{i} \in \Phi_\e^i$, for all $i \in
\{1, \ldots, k\}$. Thus by Proposition 3.4 in \cite{CR1},
Proposition \ref{prop3}, and (\ref{63}) we deduce that, as $\e \to
0$,
\[\max_{s \in T}\Gamma_\e(\gamma(s)) \geq E + o(1).\] Since $\limsup_{\e \to 0}D_\e \le E$ this contradicts
(\ref{61}). \medskip

Now let $(u_n^R)$ be a Palais-Smale sequence corresponding to a
fixed small $\e > 0$. Since $(u_n^R)$ is bounded in
$H^1_0(B(0,R/\e))$, we can deduce that $u^R_n$ converges, up to
subsequence, strongly to some  $u^R$ in $H^1_0(B(0,R/\e))$ and $u^R$
is a critical point of $\Gamma_\e$ on $H^1_0(B(0,R/\e))$.

Arguing as in Proposition 2 in \cite{BJ12}, we directly derive that
$u^R$  converges strongly to some $u_\ge$ as $R \to + \infty$ and
$u_\e \in X_\e^{d} \cap \Gamma_{\e}^{D_{\e}}$ is a critical point of
$\Gamma_\ge$.
\end{proof}

\noindent{\bf Completion of the Proof for Theorem \ref{main}.} We
see from Proposition \ref{prop7} that there exists $\e_0 > 0$ such
that, for $\e \in (0,\e_0),$ $\Gamma_\e$ has a critical point $u_\e
\in X_\e^d \cap \Gamma_\e^{D_\e}.$ Thus $u_\e$ satisfies
\begin{equation} \label{101}
\bigg( \frac{1}{i} \nabla - A_\e \bigg)^2  u_\e + V_\e u_\e =
f(|u_\e|^2)u_\e - (p+2)\Big(\int\chi_\e |u_\e|^2 dy -
1\Big)_+^{\frac{p}{2}}\chi_\e u_\e \ \textrm{ in } \ \RN.
\end{equation}
Exploiting Kato's inequality (see \cite[Theorem X.33]{RS})
\[
\Delta |u_\e| \geq - Re \bigg( \frac{\bar{u_\e}}{|u_\e|}\bigg(
\frac{\nabla}{i} - A_\e(y)\bigg)^2 u_\e\bigg)
\]
we obtain
\begin{equation} \label{102}
\Delta |u_\e| \geq V_\e |u_\e| - f(|u_\e|^2)|u_\e| +
(p+2)\Big(\int\chi_\e |u_\e|^2 dy - 1\Big)_+^{\frac{p}{2}}\chi_\e
|u_\e| \ \textrm{ in } \ \RN.
\end{equation}
Moreover by Moser iteration \cite{GT} it follows that $(\Vert u_\e
\Vert_{L^\infty})$ is bounded. Now by Proposition \ref{prop40}, we
see that
\[
\lim_{\e \to 0} \int_{\RN \setminus (\M^{2\beta})_\e}|D^\e u_\e|^2 +
\tilde V_\e|u_\e|^2 dy = 0,
\]
and thus, by elliptic estimates (see \cite{GT}), we obtain that
\begin{equation} \label{103}
\lim_{\e \to 0} \Vert  u_\e  \Vert_{L^\infty(\RN \setminus
(\M^{2\beta})_\e)} = 0. \end{equation} This gives the following
decay estimate for $u_\e$ on $\R^N \setminus (\M^{2\beta})_\e \cup
(\Z^\beta)_\e$
\begin{equation} \label{104}
 |u_\e(y)| \le C \exp(-c \operatorname{dist}(y, (\M^{2\beta})_\e \cup
(\Z^{\beta})_\e))
\end{equation}
for some constants $C, c >0$. Indeed from \textbf{(f1)} and
(\ref{103}) we see that
\[
\lim_{\e \to 0}   \|f(|u_\e|^2)\|_{L^\infty (\R^N \setminus
(\M^{2\beta})_\e \cup (\Z^{\beta})_\e)} =0.
\]
Also $\inf \{V_\e(y) | y \notin (\M^{2\beta})_\e \cup
(\Z^{\beta})_\e \} >0$. Thus, we obtain the decay estimate
(\ref{104}) by applying standard comparison principles (see
\cite{PW}) to (\ref{102}). \medskip

If $\Z \neq \emptyset$ we shall need, in addition, an estimate for
$|u_\e|$ on $(\Z^{2\beta})_\e$. Let $\{H^i\}_{i \in I}$ be the
connected components of int$(\Z^{3\delta})$ for some index set $I.$
Note that $\Z \subset \bigcup_{i \in I}H^i$ and $\Z$ is compact.
Thus, the set $I$ is finite. For each $i \in I,$ let $(\phi^i,
\lambda^i_1)$ be a pair of first positive eigenfunction and
eigenvalue of $- \Delta$ on $(H^i)_\e$ with Dirichlet boundary
condition. From now we fix an arbitrary $i \in I$. By elliptic
estimates \cite[Theorem 9.20]{GT} and using the fact that
$(Q_\e(u_\e))$ is bounded we see that for some constant $C>0$
\begin{equation} \label{l-a}
 \Vert  u_\e  \Vert_{L^\infty ((H^i)_\e)} \le C  \e^{3 / \mu}.
\end{equation}
Thus, from \textbf{(f1)} we have, for some $C>0$
\[
\|f(|u_\e|^2) \|_{L^\infty ((H^i)_\e)} \le C \e^3.
\]
Denote $\phi^i_\e(y) = \phi^i(\e y)$. Then, for sufficiently small
$\e > 0$, we deduce that for $y \in \textrm{int}((H^i)_\e)$,
\begin{equation} \label{l-b}
\Delta \phi^i_\e (y) -V_\e(x)\phi^i_\e(y) + f(|u_\e(y)|^2)\phi^i_\e
(y) \le \Big ( C \e^3 - \lambda_1 \e^2 \Big) \phi^i_\e \leq 0 .
\end{equation}
Now, since $\operatorname{dist}(\partial (\Z^{2\beta})_\e,
(\Z^{\beta })_\e) = \beta /\e$, we see from (\ref{104}) that for
some constants $C, c
>0$,
\begin{equation} \label{l-c}
\|u_\e\|_{L^{\infty} (\partial (\Z^{2 \beta})_\e)} \le C \exp (- c
/\e).
\end{equation}

We normalize $\phi^i$ requiring that
\begin{equation} \label{l-d}
\inf_{y \in (H^i)_\e \cap
\partial (\Z^{2 \delta})_\e} \phi^i_\e(y) = C \exp( -c /\e)
\end{equation}
for the same  $C$, $c>0$ as in (\ref{l-c}). Then, we see that for
some $\kappa  > 0$,
\[
\phi^i_\e(y) \le \kappa C \exp(-c/\e), \quad y \in (H^i)_\e \cap
(\Z^{2\beta})_\e.
\]
Now we deduce, using (\ref{l-a}), (\ref{l-b}), (\ref{l-c}),
(\ref{l-d}) and  \cite[B.6 Theorem]{St} that for each $i \in I,$
$|u_\e| \leq \phi^i_\e$ on $(H^i)_\e \cap (\Z^{2 \beta})_\e$.
Therefore
\begin{equation} \label{105}
|u_\e (y)| \leq C \exp (- c /\e) \mbox{ on } (\Z^{2 \delta})_\e
\end{equation}
for some $C, c > 0$. Now (\ref{104}) and (\ref{105}) implies
 that $Q_\e(u_\e) = 0$ for $\e >0$ sufficiently small and thus $u_\e$ satisfies (\ref{eq:1.6}).
Now using Propositions  \ref{Prop1} and \ref{prop40}, we readily
deduce that the properties of $u_\e$ given in Theorem \ref{main}
hold. Here, in (\ref{eq:1.5}) we also use the fact, proved in Lemma
\ref{levels}, that any least energy solution of (\ref{eq:2.10}) has
the form $e^{i \tau} U$ where $U$ is a positive least energy
solution of (\ref{eq:2.7}) and $\tau \in \R$. $\Box$ \medskip

{\bf Acknowlegement:} The authors would like to thank  the referee
for many useful comments which help clarify the paper.

\end{document}